\tikzset{individu/.style={draw,thick}}
\theoremstyle{plain}
\newtheorem{theorem}{Theorem}[section]
\newtheorem{corollary}[theorem]{Corollary}
\newtheorem{lemma}[theorem]{Lemma}
\newtheorem{proposition}[theorem]{Proposition}
\theoremstyle{definition}
\theoremstyle{remark}
\newtheorem{remark}[theorem]{Remark}
\numberwithin{equation}{section}
\newcommand \listoftodos{\section*{Todo list} \@starttoc{tdo}}
\newcommand\l@todo[2]
\noindent \textit{#2}, \parbox{10cm}{#1}\par} \makeatother
\newcommand{\N}{\mathbb{N}}
\newcommand{\Z}{\mathbb{Z}}
\newcommand{\R}{\mathbb{R}}
\newcommand{\e}{\mathrm{e}}
\newcommand{\dd}{\mathrm{d}}
\newcommand{\egaldistr}{\,{\overset{(d)}{=}}\,}
\DeclareMathOperator{\E}{\mathbb{E}}
\renewcommand{\P}{\mathbb{P}}
\renewcommand{\epsilon}{\varepsilon}
\title{Counting the zeros of an elephant random walk}
\author{Jean Bertoin\thanks{Institute of Mathematics, University of Zurich, Switzerland.} }
\date{ }
\begin{document}

\maketitle

\begin{abstract} We study how memory  impacts passages at the origin for a so-called elephant random walk in the diffusive regime.
We observe that the number of zeros  always grows asymptotically like the square root of the time, despite the fact that,  depending on the memory parameter,
first return times to $0$ may have a finite expectation or a fat tail with exponent less than $1/2$. We resolve this apparent paradox by recasting the questions in the framework of scaling limits for Markov chains and self-similar Markov processes.
\end{abstract}

\noindent \emph{\textbf{Keywords:}} Elephant random walk, scaling limits, stochastic reinforcement.

\medskip

\noindent \emph{\textbf{AMS subject classifications:}}  60J10; 60J55; 82C41; 60G42.

\section{Introduction}
\label{sec:introduction}
Motivated by the study of the effects of memory on the asymptotic behavior of non-Markovian stochastic processes, Sch\"utz and Trimper \cite{SchTr}  introduced the so-called elephant random walk. The latter is a nearest neighbor process $S=(S(n))_{n\geq 0}$ on the integer lattice $\Z$; its dynamics depend on a parameter $p\in[0,1)$ that specifies the probability of repetition of certain steps. 
Roughly speaking, at each integer time, the elephant remembers one of its previous steps chosen uniformly at random; then it decides, either
with probability $p$ to repeat this step, or with complementary probability $1-p$ to walk in the opposite direction. The elephant is thus more likely  to continue walking in the average direction it has already taken when  $p>1/2$, whereas for $p<1/2$,  it rather tends  to walk back (and for $p=1/2$, the elephant is fully undecided and its path follows that of a simple symmetric random walk). 

Obviously, the dynamics of an elephant random walk mirror those of a two-color random urn \cite[Chapter 6]{Mahmoud}, say where each ball is either $\oplus$ or $\ominus$. At each integer time, a ball is picked uniformly at random in the urn and is then returned together with a new ball  either of the same sign with probability $p$, or of the opposite sign with probability $1-p$. Although our presentation can be entirely developed in the setting of random urns, we shall rather use the elephant framework for a closer connexion with the literature. 

The asymptotic behavior after a proper rescaling of the elephant random walk is well understood, see  \cite{BaurBer, Bercu, ColGavSch1, ColGavSch2,KuTa}\footnote{As it has been pointed out in \cite{BaurBer}, Janson \cite{Janson} proved more general results in the setting of random urns. It has also  been observed by Kubota and Takei \cite{KuTa} that in the case $p>1/2$, the steps of an elephant random walk can be described in terms of a so-called correlated  Bernoulli process, and some of the results above derive directly from an earlier work of Heyde \cite{Heyde}; see also further references quoted in  \cite{KuTa}.
}.
 In short, the regime $p<3/4$ is diffusive, in the sense that 
$(n^{-1/2} S(\lfloor nt\rfloor))_{t\geq 0}$ converges in distribution as $n\to \infty$ to some centered Gaussian process. The regime 
 $p>3/4$ is superdiffusive with $n^{1-2p} S(n) \to   L$ a.s.
for some non-degenerate random variable $L$, and in turn $n^{-1/2}(S(n)- n^{2p-1}L)$ converges to some centered Gaussian variable. Many more results are now known on this process and its variations  which have attracted a growing interest in the recent years, as one can see from a search engine (in particular, note that \cite{Gut} has a title similar to ours, but actually treats a different question).

The elephant random walk can be viewed as a member of the family of reinforced processes; see \cite{Pem} for a survey. The general question of how reinforcement affects the long time behavior of processes  has been intensively investigated for many years, notably in the setting of  edge linear reinforcement for which we shall now recall some of the most significant  results. Angel \textit{et al.} \cite{ACK} proved positive recurrence on any graph with bounded degrees for sufficiently small initial weight. Establishing recurrence on the two-dimension integer lattice $\Z^2$  with arbitrary initial constant weights remained an open problem for many years until the works of Merkl and Rolles \cite{MerRol} and Sabot and Zeng \cite{SabZen}.
In a different direction, Sabot \cite{Sabot} showed that directed edge reinforced random walks are transient on $\Z^d$ for $d\geq 3$.

The analysis of recurrence versus transience for linearly reinforced processes which we briefly mentioned above suggests that one should also consider elephant random walks without rescaling and  investigate
quantitatively the role of the parameter $p$ on the frequency of visits to the origin. We thus consider the counting process of zeros,
\begin{equation}\label{E:defZ}
Z(n)\coloneqq \mathrm{Card}\{1\leq j \leq n: S(j)=0\}, \qquad n\geq 0.
\end{equation}
In terms of a two-color random urn, $Z(n)$ corresponds to the number of times before time $n$ at which the urn contains the same number of balls $\oplus$ as of balls $\ominus$. 
Recently, Coletti and Papageorgiou \cite[Theorems 3.1 and 3.3]{ColPapa} pointed out that $\lim_{n\to \infty} Z(n)=\infty$ a.s.  if $p\leq 3/4$,
whereas $\lim_{n\to \infty} Z(n)<\infty$ a.s. if $p> 3/4$.
In other words,  the elephant random walk is recurrent for $p\leq 3/4$ and transient for $p>3/4$.

In this work, we focus on the diffusive regime, that is we henceforth suppose that $p<3/4$ and \textit{a fortiori} the elephant is recurrent.
We shall first show that 
\begin{equation} \label{E:asympZ}\lim_{n\to \infty} Z(n)/\sqrt n= V \qquad \text{in distribution},
\end{equation}
where $V$ is some non-degenerate random variable whose law depends on $p$. So roughly speaking,   for any parameter $0\leq p <3/4$, the number of zeros  before time $n$  grows as $\sqrt n$ when $n\to \infty$.
 We might  then argue that, in comparison to the simple random walk,  the dynamics with memory of the elephant random walk in the diffusive regime do not significantly  alter 
 the number of visits to the origin (see however the forthcoming Remark \ref{Rema}). Likely, this should not come as much of a surprise, since we are precisely considering a diffusive regime. 
 
 This could be the end of a not so thrilling story. Now recall (see e.g. Feller \cite[Theorem 7]{Feller49}) that for a \textit{time-homogeneous} Markov chain on a discrete state space, the number of  passages to the origin before time $n$ grows like $\sqrt n$ if and only if  the tail distribution of the first return time to the origin
decays like $1/\sqrt n$. The elephant random walk is a \textit{time-inhomogeneous} Markov chain (although some works in the literature improperly assert its non-Markovian character). One might be tempted to think that the same should still hold, that is,  if we write 
$$R\coloneqq \inf\{j\geq 1: S(j)=0\}$$
 for the first return time of $S$ to the origin (in terms of two-color random urns, $R$ is the first time at which the urn contains the same number of balls $\oplus$ as of balls $\ominus$), then one should have  $\P(R>n) \approx 1/\sqrt n$ as $n\to \infty$ for any $0\leq p <3/4$.  However this is far from being the case; indeed  Coletti and Papageorgiou \cite[Theorem 3.2]{ColPapa} proved recently that $\E(R)<\infty$ whenever $p<1/6$.
 We shall 
investigate further the distribution of the first return time, and  in particular, establish the following tail estimate\footnote{This is valid for $p> 0$ only, as for $p=0$, the first return  to the origin occurs at time $2$ a.s.  For $p=0$, if we write $R_2=\inf\{j\geq 3: S(j)=0\}$ for the second return time to zero, then Theorem \ref{T1} yields
$\P(R_2>n)  \sim   2\sqrt{6 n^{-3}/{\pi }}$ as $n\to \infty.$}
\begin{equation}\label{E:T1}
\P(R>n)  \sim   \frac{ 1}{\Gamma(2p)} \sqrt{\frac{6-8p}{\pi }} n^{2p-3/2}\qquad \text{as } n\to \infty.
 \end{equation}
Note that, as a consequence, 
$$\E(R)<\infty \text{ if and only if }p<1/4,$$which improves upon \cite[Theorem 3.2]{ColPapa}.

Roughly speaking, \eqref{E:T1} might suggest that the number of zeros $Z(n)$ should grow approximately linearly in $n$ for $p<1/4$ and like $n^{(3-4p)/2}$ for $p>1/4$,
hence disagreeing with \eqref{E:asympZ}
(note also that the discrepancy disappears for $p=1/2$, which is the case when $S$ is a simple symmetric random walk). 
This reasoning is only informal, and the purpose of this work is not only to establish \eqref{E:asympZ} and \eqref{E:T1}, but also to understand why these two asymptotic behaviors actually agree one with the other.

We will analyze more precisely the tail behavior of the first return time to the origin after time $k$, conditionally on $S(k)=0$, and this uniformly in $k$.
We will show in Theorem \ref{T1} that an estimate similar to \eqref{E:T1} holds, essentially provided that the right-hand side there is multiplied by a factor depending on $k$. 
In short, when $k\approx n$, this factor is of order $n^{1-2p}$ and as a result, the estimate for the tail of the first return time after $k$ is then of order $1/\sqrt n$.
This enables us to apply a general criterion for the scaling limit of Markov chains on $\N$ in \cite{BeKor}, and resolve the apparent paradox in the framework of self-similar Markov processes. 

 Technically, the starting point of our approach is inspired from \cite{ColGavSch2}. Specifically, there is a sequence $(a_n)$ of real factors which turn the elephant random walk into a martingale $(a_n S(n))$, so that  the latter can be embedded in a Brownian path. The fact that $S$ is a nearest-neighbor process entails that $(a_n S(n))$ is a so-called binary splitting martingale, and specificities of the embedding enable us to express the counting process $Z$  of the zeros of the elephant in terms of the excursions of the Brownian motion away from $0$. This is the key to the proof of Theorem \ref{TZ}, which is a  multidimensional and more explicit version of \eqref{E:asympZ}.  Theorem \ref{TZ} can be viewed as a relative of L\'evy's downcrossing theorem for the Brownian local time; and Brownian excursion theory plays a key role for its proof. The proof of Theorem \ref{T1}  requires a much finer analysis of the Brownian embedding, and notably concentration inequalities for which we use the Burkholder-Davis-Gundy inequalities and universal bounds for the moments of sums of independent variables due to Latała \cite{Latala}.

 The plan of this paper is as follows. In Section 2, we present some background on the elephant random walk and its embedding in a Brownian path. 
 We then establish in Section 3 a scaling limit theorem for the counting process of the zeros. We prove uniform estimates for the tail distribution of first return times in Section 4,
 and for this, we establish concentration inequalities for the embedding. Finally, the reconciliation of the two preceding limit theorems is achieved in Section 5, using the framework of scaling limits of Markov chains on $\N$ and self-similar Markov processes.

  Let us also briefly discuss some natural open problems in this area.  It would be very interesting to address the question of recurrence and transience in higher dimension, notably 
  for the multi-dimensional elephant random walk which has been considered recently by Bercu and Laulin \cite{BerLau1, BerLau2}; see also \cite{Marco}. Likely, one should  be able to deduce from local limit theorems as in \cite{FHM}, that in dimension $3$ and above, the expected number of zeros is finite and \textit{a fortiori} the elephant is always transient. In dimension $2$, the expected number of zeros should be infinite in the diffusive regime;  however one cannot conclude that recurrence holds, because of the failure of the  Markov property.  The problem of deciding between transience and recurrence for an elephant random walk in dimension $2$ is thus open to the best of my knowledge. Its solution, not to mention the finer analysis of the asymptotic behavior of return times to the origin, likely requires new ideas. 
 Actually,  I have been surprised to realize that some innocent looking questions for a seemingly simple one-dimensional process like the elephant random walk could be rather delicate to analyze. Considering their two-dimensional versions then seems quite challenging. 

 \textsl{We end this introduction with an important convention that will be used throughout this text. 
On many occasions, we shall need to bound quantities that depend on one or two variables of time, often denoted by $k$ and $n$.
We shall then write  $c$ for some constant which depends neither of $k$ nor of $n$, and  may be different in different expressions.
Possibly, such constant may depend on further parameter(s), say $q$ (typically a power, like in the Burkholder-Davis-Gundy inequality),
in which case we may use the notation $c_q$; of course the same convention as above applies. 
For the sake of notational simplicity, dependence in the memory parameter $p$ is systematically omitted. }

\section{Background on a Brownian embedding}

We start by defining formally the elephant random walk in the diffusive regime.  We recall that the  memory parameter $p\in [0,3/4)$ has been fixed and this will not be mentioned any further.
One considers a random binary sequence $X_1, X_2, \ldots$ in $\{+1,-1\}$  such that
$$X_{n+1} \coloneqq \varepsilon_n X_{u(n)}\qquad \text{for every }n\geq 1,$$
where all the variables $u(1), u(2), \ldots$ and $\varepsilon_1, \varepsilon_2, \ldots$ are independent, 
$$u(n) \text{ has the uniform distribution on }\{1,\ldots, n\}$$
and 
$$\P(\varepsilon_n=1)=p=1-\P(\varepsilon_n=-1).$$
The  elephant random walk (with memory parameter $p$) is the process $S=(S(n))_{n\geq 0}$ of the partial sums
$$S(n)\coloneqq X_1+\cdots + X_n, \qquad n\geq 0.$$

It should be plain that replacing the first step $X_1$ by its opposite $-X_1$ while keeping the dynamics unchanged simply yields the reflected path $-S$. As a consequence, the zeros of $S$  that we are interested in, are independent of  $X_1$. For the sake of simplicity, we shall henceforth also assume that 
$$\P(X_1=1)=\P(X_1=-1)=1/2,$$ 
which thus induces no loss of generality; and the law of $S$ only depends on the memory parameter $p$.

We readily see that the  conditional law of $X_{n+1}$
 given $X_1, \ldots, X_n$  is that of a Bernoulli variable with values $\pm 1$ and  parameter  $1/2+(2p-1) S(n)/2n$. Since this  quantity
 only depends on $n$ and $S(n)$, the two-dimensional process $(n,S(n))_{n\in\N}$ is a time-homogeneous Markov chain. 
   For every even integer $k\geq 0$, we denote  the law of $S$ started at time $k$  from $0$ by $\P_k$,
   that is
 \begin{equation}\label{E:P_k} \P_k \text{ is the conditional distribution of }(S(n))_{n\geq k} \text{ given  }S(k)=0.
 \end{equation}

 
We next introduce the sequence of factors
\begin{equation}\label{E:defa}
a_0\coloneqq 0 \text{ and }a_n\coloneqq \frac{\Gamma(n)}{\Gamma(n+2p-1)}, 
\qquad \text{for }n\geq 1,
\end{equation}
with the convention that $a_1=0$ for $p=0$, and  
recall from Stirling formula the useful estimate
\begin{equation}\label{E:estab}
a_n \sim n^{1-2p}  \qquad \text{ as }n\to \infty. 
\end{equation}
We then define the process
$$M_k(n)\coloneqq a_{k+n} S(n+k)
\qquad \text{for }n\geq 0,$$
and gather a few of its basic properties. We provide proofs for the reader's convenience, though closely related results have been already observed in the literature.

\begin{lemma} \label{L0}  For every even integer $k\geq 0$, we have:
\begin{enumerate}
\item[(i)]
The process $M_k=(M_k(n))_{n\geq 0}$  
is a $\P_k$-martingale.
\item[(ii)] Its $(n+1)$-th increment 
$$\Delta M_k(n+1)\coloneqq M_k(n+1)-M_k(n)$$ 
satisfies
$$\Delta M_k(n+1)= \frac{1-2p}{k+n+2p-1} M_k(n) \pm a_{k+n+1}.$$
\item[(iii)]
For every $q\geq 1$, one has 
$$\E(|M_k(n)|^{2q}) \leq c_q (k+n)^{(3-4p)q}.$$

\end{enumerate}
\end{lemma}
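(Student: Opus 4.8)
The plan is to handle (i) and (ii) by a single explicit computation of the one-step increment, and to derive (iii) from the discrete Burkholder--Davis--Gundy (BDG) inequality together with a deterministic bound on the quadratic variation. Throughout I work under $\P_k$, write $\calF_n = \sigma(S(k),\dots,S(k+n))$, and set $m = k+n$.

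For (i) and (ii), since $S(m+1) = S(m)\pm 1$, I would expand
\[
\Delta M_k(n+1) = a_{m+1}S(m+1) - a_m S(m) = (a_{m+1}-a_m)\,S(m) + a_{m+1}X_{m+1}.
\]
The Gamma recursions $\Gamma(m+1)=m\,\Gamma(m)$ and $\Gamma(m+2p)=(m+2p-1)\Gamma(m+2p-1)$ give $a_{m+1} = \frac{m}{m+2p-1}\,a_m$, whence $a_{m+1}-a_m = \frac{1-2p}{m+2p-1}\,a_m$. Since $a_{m+1}X_{m+1} = \pm a_{m+1} = \pm a_{k+n+1}$, substituting into the display yields precisely the formula in (ii). For (i) I would instead take the $\calF_n$-conditional expectation of the same display: the transition rule of the chain gives $\E(X_{m+1}\mid\calF_n) = \frac{2p-1}{m}S(m)$ (the conditioning $S(k)=0$ being irrelevant for $m\ge k$ by the Markov property), so that the conditional mean of $a_{m+1}X_{m+1}$ exactly cancels the drift $(a_{m+1}-a_m)S(m)$, and $\E(\Delta M_k(n+1)\mid\calF_n)=0$. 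This is exactly why the normalising sequence $(a_n)$ is defined as it is.

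For (iii), note first that $M_k(0)=a_kS(k)=0$, so the discrete BDG inequality gives, for every $q\ge 1$,
\[
\E\big(|M_k(n)|^{2q}\big) \;\le\; c_q\,\E\bigg(\Big(\sum_{j=1}^n (\Delta M_k(j))^2\Big)^{q}\bigg).
\]
The point is that the bracket is bounded \emph{deterministically}. From \eqref{E:estab} there is a constant with $a_m \le c\,m^{1-2p}$ for all $m\ge 1$, and combining the trivial bound $|S(m)|\le m$ with the formula in (ii) shows that both of its terms are of order $(k+j)^{1-2p}$: the prefactor $\frac{1-2p}{k+j+2p-2}$ is $O(1/(k+j))$ while $|M_k(j-1)|\le a_{k+j-1}(k+j-1)\le c\,(k+j)^{2-2p}$, and $a_{k+j}\le c\,(k+j)^{1-2p}$. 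Hence $(\Delta M_k(j))^2\le c\,(k+j)^{2-4p}$. Since $p<3/4$ we have $2-4p>-1$, so comparison with $\int x^{2-4p}\,\dd x$ yields
\[
\sum_{j=1}^n (\Delta M_k(j))^2 \;\le\; c\sum_{j=1}^n (k+j)^{2-4p} \;\le\; c\,(k+n)^{3-4p}.
\]
Raising to the power $q$ and inserting into the BDG estimate gives the claim; the finitely many small indices for which the prefactor in (ii) is not yet positive contribute a bounded amount absorbed into $c$.

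The computations for (i) and (ii) are entirely routine once the Gamma recursion is identified. The only delicate point in (iii) is to convince oneself that the crude deterministic bound $|S(m)|\le m$ is good enough: replacing the typical size $\sqrt m$ of $S(m)$ by the worst case $m$ looks wasteful, but the decay of $a_m$ and of the prefactor $1/(k+j)$ compensates exactly, so that the quadratic variation attains the sharp order $(k+n)^{3-4p}$ rather than something larger. The diffusive assumption $p<3/4$ enters precisely here, as the condition ensuring that $\sum_j (k+j)^{2-4p}$ grows like its natural integral $(k+n)^{3-4p}$ instead of being governed by the endpoint. I would also remark that, because the quadratic variation is in fact controlled pathwise, BDG is used only to pass from the increments back to $M_k(n)$ itself; a direct moment recursion would give the same result, if less transparently.
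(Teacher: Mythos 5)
Your proof is correct and follows essentially the same route as the paper: the identical Gamma-recursion computation for (ii) (with (i), which the paper simply cites from Heyde/Bercu, obtained by the same standard conditional-expectation cancellation you carry out), and for (iii) a deterministic bound on the increments and hence on the quadratic variation, followed by the Burkholder--Davis--Gundy inequality. The only cosmetic difference is that the paper bounds the increments via the exact inequality $|\Delta M_k(n+1)|\le 2a_{k+n+1}$, so that the quadratic variation is at most $4A_{n+k}\le c\,(k+n)^{3-4p}$, whereas you use the Stirling estimate $a_m\le c\,m^{1-2p}$ directly; both yield the same order.
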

\begin{proof}Indeed, for $k=0$, the martingale property of $M_0$ was observed in \cite{Heyde}, see also \cite{Bercu,ColGavSch1}, and the extension for arbitrary $k\geq 2$ follows immediately. Next, since $S$ is a nearest neighbor process on $\Z$, we have
$$\Delta M_k(n+1)= a_{k+n+1}(S(n+k) \pm 1) - a_{k+n} S(n+k),$$
and observing the identity 
\begin{equation}\label{E:eqrefer}a_{n+k+1}-a_{n+k}= a_{n+k} \frac{1-2p}{n+k+2p-1},
\end{equation}
this yields (ii). 

We finally stress that, since plainly $|M_k(n)| \leq a_{n+k}(n+k)$ and $|1-2p| \leq 1$, one has 
\begin{equation} \label{E:elemb}
\left |\frac{1-2p}{k+n+2p-1} M_k(n)\right|\ \leq a_{k+n+1}.
\end{equation}
We then see from (ii)  that 
$|\Delta M_k(n+1)|\leq 2 a_{k+n+1}$.
The quadratic variation of $M_k$ is hence bounded from above  by $[M_k,M_k](n) \leq 4 A_{n+k}$, with
\begin{equation}\label{E:defA}
A_{n}\coloneqq \sum_{j=1}^n a^2_{j},
\end{equation}
Note from \eqref{E:estab} that
\begin{equation}\label{E:estab'}
A_{n}\sim \frac{n^{3-4p}}{3-4p}  \qquad \text{ as }n\to \infty;
\end{equation}
the last claim now follows from the Burkholder-Davis-Gundy inequality.
\end{proof}

 The  idea at the basis of this work is borrowed from \cite{ColGavSch2}. We shall embed the $\P_k$-martingales  $M_k$ associated to the elephant random walk in Lemma \ref{L0} into a linear Brownian motion $B=(B(t))_{t\geq 0}$ started from $B(0)=0$. That is, for each even number $k$,  we shall construct an increasing sequence $(T_{k,n})_{n\geq 0}$ of stopping times for $B$ such that  
 \begin{equation}\label{E:embedding}
 \text{the processes }
 (M_k(n))_{n\geq 0} \text{ and } (B(T_{k,n}))_{n\geq 0} \text{ have the same law.}
 \end{equation} 
 More specifically, it is implicit  that the process $M_k$ above is considered under the conditional probability measure $\P_k$.

Recall from Lemma \ref{L0}(ii) that $M_k$ is a binary splitting martingale, in the sense that given its past up to time $n$, its next increment $\Delta M_k(n+1)$ can only take two values. There is a basic embedding of any binary splitting martingale in a Brownian path that we now describe.
We define
inductively the increasing sequence of stopping times $(T_{k,n})_{n\geq 0}$ by $T_{k,0}\coloneqq 0$, and for $n\geq 0$, $T_{k,n+1}$ by 
\begin{equation} \label{E:defT}
 \inf\left\{t> T_{k,n}: B(t)-B(T_{k,n})= \frac{1-2p}{k+n+2p-1} B(T_{k,n}) \pm a_{k+n+1}\right\},
\end{equation}
where we agree that $T_{0,1}=0$ for $p=1/2$ and $k=0$. Then \eqref{E:embedding}
holds, and without loss of generality, we shall henceforth assume that the elephant random walk and the Brownian motion have been constructed in such a way that  the identity \eqref{E:embedding}
actually holds a.s. and not merely in distribution. 

The embedding yields a simple connection between the zeros of $S$  and the passage times  of $B$ at $0$ that lies at the heart of this work.
In this direction, it is convenient to introduce the notation
$$\mathcal{Z}_k\coloneqq \{n\geq 0: S(k+n)=0\}.$$

 \begin{lemma} \label{L5pre}  For every $k\geq 0$ ($k\geq 2$ for $p=0$), the inclusions
$$\{T_{k,n}: n\in \mathcal{Z}_k\} \subset \{t\geq 0: B(t)=0\} \subset \bigcup_{n\in \mathcal{Z}_k} [T_{k,n}, T_{k,n+1})$$
hold $\P_k$-a.s.
 \end{lemma}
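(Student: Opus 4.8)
The key is the a.s.\ identity $B(T_{k,n})=M_k(n)=a_{k+n}S(k+n)$ recorded in \eqref{E:embedding}, which lets me read the zeros of $B$ directly off the embedded martingale. The hypothesis $k\geq 0$ (resp.\ $k\geq 2$ when $p=0$) is precisely what guarantees $a_{k+n}>0$ for every $n\geq 0$, since $a_0=0$ and $a_1=0$ for $p=0$; hence
$$n\in\mathcal{Z}_k\iff S(k+n)=0\iff M_k(n)=0\iff B(T_{k,n})=0.$$
The left-hand inclusion is then immediate: if $n\in\mathcal{Z}_k$ then $B(T_{k,n})=0$, so $T_{k,n}$ is a zero of $B$.

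For the right-hand inclusion I would describe $B$ on each interval $[T_{k,n},T_{k,n+1})$. As $S$ is nearest-neighbour, Lemma~\ref{L0}(ii) gives $M_k(n+1)=a_{k+n+1}(S(k+n)\pm 1)$, so by the definition \eqref{E:defT} of $T_{k,n+1}$ as a first exit time, $B$ stays strictly between the two levels $a_{k+n+1}(S(k+n)-1)$ and $a_{k+n+1}(S(k+n)+1)$ throughout $(T_{k,n},T_{k,n+1})$, having started at $B(T_{k,n})=M_k(n)$, which lies between them by \eqref{E:elemb}. If $n\notin\mathcal{Z}_k$, i.e.\ $S(k+n)\neq 0$, then both levels lie weakly on the same side of the origin as $S(k+n)$, the inner one having absolute value $a_{k+n+1}(|S(k+n)|-1)\geq 0$; consequently $B$ keeps a constant strict sign on $(T_{k,n},T_{k,n+1})$, and moreover $B(T_{k,n})=M_k(n)\neq 0$. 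Thus $B$ has no zero in $[T_{k,n},T_{k,n+1})$. Taking the contrapositive, every zero of $B$ lying in $[0,T_\infty)$, with $T_\infty:=\lim_n T_{k,n}$, belongs to some $[T_{k,n},T_{k,n+1})$ with $n\in\mathcal{Z}_k$.

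It remains to check that $T_\infty=\infty$ a.s., so that $[0,T_\infty)=[0,\infty)$ already contains all zeros of $B$; I expect this to be the main obstacle. Recurrence of $S$ in the diffusive regime (\cite{ColPapa}) makes $\mathcal{Z}_k$ a.s.\ infinite, so $B$ vanishes along the increasing sequence $(T_{k,n})_{n\in\mathcal{Z}_k}$, but a priori these zeros could accumulate at a finite $T_\infty$, and ruling this out requires control on the size of $M_k$. I would first show the quadratic variation diverges, $\langle M_k\rangle_\infty=\infty$ a.s.: on $\{\langle M_k\rangle_\infty<\infty\}$ the $L^2$-martingale $M_k$ converges, hence is bounded, so writing $d_n:=\tfrac{1-2p}{k+n+2p-1}M_k(n)$ one gets $\sum_n d_n^2\leq(\sup_m M_k(m)^2)\sum_n(\tfrac{1-2p}{k+n+2p-1})^2<\infty$, whence $\langle M_k\rangle_\infty=\sum_n(a_{k+n+1}^2-d_n^2)=\infty$ since $\sum_n a_{k+n+1}^2=\infty$ by \eqref{E:estab'} — a contradiction. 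Now on $\{T_\infty<\infty\}$ continuity of $B$ forces $M_k(n)=B(T_{k,n})$ to converge; when $p\geq 1/2$ the factors $a_n$ are bounded, so $M_k$ has bounded increments and its convergence is equivalent to $\langle M_k\rangle_\infty<\infty$, contradicting the above. For $p<1/2$, where $a_n$ is instead increasing, I would argue directly: between consecutive zeros the path $B$ attains absolute value at least $a_1>0$ at the step when $S$ leaves the origin, and a Brownian path performs only finitely many excursions above a fixed height in finite time, so the infinitely many such excursions cannot be fitted into a bounded time interval unless $T_\infty=\infty$. In all cases $[0,T_\infty)=[0,\infty)$, which completes the argument.
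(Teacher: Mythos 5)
Your proof is correct, and for the two inclusions themselves it follows exactly the paper's route: the identification $\mathcal{Z}_k=\{n\geq 0: B(T_{k,n})=0\}$, and the observation that on $[T_{k,n},T_{k,n+1})$ the Brownian path is confined between the two exit levels $a_{k+n+1}(S(k+n)-1)$ and $a_{k+n+1}(S(k+n)+1)$, an open interval which contains the origin precisely when $n\in \mathcal{Z}_k$. Where you genuinely go beyond the paper is the final step: you observe that the right-hand inclusion also requires the intervals $[T_{k,n},T_{k,n+1})$ to cover every zero of $B$, i.e.\ $T_\infty\coloneqq\lim_n T_{k,n}=\infty$ a.s.\ (the zero set of $B$ being unbounded), a point the paper's proof passes over in silence. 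Your verification of this is sound: the computation of the predictable bracket $\langle M_k\rangle_\infty=\sum_n\bigl(a_{k+n+1}^2-d_n^2\bigr)=\infty$ a.s.\ is correct, for $p\geq 1/2$ the bounded-increment martingale convergence theorem applies (increments are bounded by $2a_{k+1}$ since $(a_n)$ is then nonincreasing), and for $p<1/2$ the oscillation argument works because infinitely many disjoint oscillations of a fixed positive size cannot fit into a compact time interval. Two small remarks: for $p=0$ the minimal excursion height is not $a_1$ (which is $0$ by the paper's convention) but $a_{k+1}\geq a_3>0$, which is exactly where the hypothesis $k\geq 2$ enters; and in the case $p<1/2$ you can avoid invoking recurrence from \cite{ColPapa} altogether, since on $\{T_\infty<\infty\}$ continuity of $B$ makes $M_k(n)=a_{k+n}S(k+n)$ converge, and as $a_{k+n}\to\infty$ this forces the integer $S(k+n)$ to vanish for all large $n$, which is impossible for a walk with steps $\pm 1$. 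With these cosmetic fixes your argument is complete, and indeed slightly more thorough than the one given in the paper.
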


\begin{proof}    Plainly, there are  the identities
 $$\mathcal{Z}_k= \{n\geq 0: M_k(n)=0\} = \{n\geq 0: B(T_{k,n})=0\}$$
 (recall that we impose $k\geq 2$ when $p=0$). 
We next observe that the embedding can be rephrased as follows. 
For every $n\geq 0$, one has $M_k(n)=B(T_{k,n})\in a_{k+n} \Z$, say $B(T_{k,n})=a_{k+n}z$ for some $z\in \Z$. 
We then consider the open interval
$$
I_{k,n+1}\coloneqq ((z-1)a_{k+n+1}, (z+1)a_{k+n+1}).
$$
Note from \eqref{E:eqrefer} that $a_{n+k}z\in I_{k,n+1}$.
During the time interval $[T_{k,n}, T_{k,n+1})$, the Brownian trajectory is confined in $I_{k,n+1}$   and hits one of the two extremities of that interval  at time $T_{k,n+1}$. 
The probability of an exit via the upper endpoint  is
$$\frac{1}{2} + \frac{2p-1}{2}\cdot \frac{z}{k+n},$$
and we recover of course the transition of the elephant random walk.

We stress that the origin lies in $I_{k,n+1}$ if and only if $B(T_{k,n})=0$, and then $I_{k,n+1}=(-a_{k+n+1},a_{k+n+1})$. 
Note also that $0$ is a boundary point of $I_{k,n+1}$ if and only if $|z|=1$, that is, if and only if $S(k+n)=\pm 1$. 
 This yields the inclusions in the statement. 
 \end{proof} 

\section{Scaling limit for the counting process of zeros}
The purpose of this section is to establish a precise version of \eqref{E:asympZ}, namely Theorem \ref{TZ} below. 
For the sake of simplicity, we will assume throughout this section that $p>0$\footnote{The case $p=0$ would request only a few very minor and purely technical modifications of the argument related to  the fact that then $S(2)=0=a_1$.}. We work here  with the unconditioned elephant (i.e. started at time $k=0$), and for the sake of simplicity, we henceforth omit the subscript $k$ in previous notation. In particular we will write $T_n=T_{0,n}$, $\mathcal{Z}= \mathcal{Z}_0$, $\P=\P_0$,  etc. 
Recall also from \eqref{E:defZ}  that 
$$Z(n)=\mathrm{Card}(\{1\leq j \leq n: j\in \mathcal{Z}\})$$
 counts the number of returns to the origin made by the elephant before time $n$.

We first need to introduce some notation.
We write $L=(L(t))_{t\geq 0}$ for the process of the local time  at level $0$ of the Brownian motion $B$, with the usual normalization so that $|B|-L$ is again a Brownian motion.
Recall  that $L$ has continuous non-decreasing sample paths such that the support of the Stieltjes measure $\dd L(t)$ coincides with the zero set of $B$, and that
$$\E(L(t))=\sqrt{2t/\pi} \qquad \text{for all }t\geq 0.$$
Consider  the process of Stieltjes integrals
$$F(t)\coloneqq \int_0^t f(s) \dd L(s), \qquad t\geq 0,$$
for any measurable function $f: \R_+\to \R_+$,
and observe from the Fubini-Tonelli theorem  that
$\E(F(t))<\infty$ if and only if $\int_0^t f(s) s^{-1/2} \dd s <\infty$. The latter holds in particular for any $t\geq 0$ when 
$f(s)=s^{\gamma}$ is a power function with exponent $\gamma>-1/2$, and \textit{a fortiori} for $\gamma =(2p-1)/(3-4p)\geq -1/3$.

We then define the process 
\begin{equation} \label{E:defLambda}
{H}(t) \coloneqq \frac{1}{\sqrt{3-4p}}  \int_0^{t^{3-4p}}  s^{(2p-1)/(3-4p)} \dd L(s), \qquad t\geq 0,
\end{equation}
and can now state our first main result.

\begin{theorem} \label{TZ} There is the weak convergence in the sense of finite dimensional distributions
$$\lim_{n\to \infty} \left(\frac{1}{\sqrt n} Z(nt)\right)_{t\geq 0} \egaldistr  \left({H}(t)\right)_{t\geq 0}.$$
\end{theorem}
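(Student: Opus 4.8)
The plan is to exploit the embedding of Lemma~\ref{L5pre}: the Brownian local time $L$ at $0$ increases only on the intervals $[T_n,T_{n+1})$ attached to the zeros $n\in\mathcal Z$, so that the counting process $Z$ can be recovered from $L$ through Itô's excursion theory. I first record the decomposition $L(T_N)=\sum_{n\in\mathcal Z,\,n<N}\Delta L_n$, where $\Delta L_n\coloneqq L(T_{n+1})-L(T_n)$ vanishes unless $n\in\mathcal Z$ (for $m\notin\mathcal Z$ the disjoint interval $[T_m,T_{m+1})$ is avoided by the zero set, by Lemma~\ref{L5pre}). For $n\in\mathcal Z$ we have $B(T_n)=0$, and by \eqref{E:defT} the time $T_{n+1}$ is the first instant at which $B$ reaches $\pm a_{n+1}$; by the strong Markov property $\Delta L_n$ is thus the local time accumulated by a fresh Brownian motion run until it leaves $(-a_{n+1},a_{n+1})$. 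Optional stopping applied to the martingale $|B|-L$ gives $\E(\Delta L_n\mid n\in\mathcal Z)=a_{n+1}$, and excursion theory identifies the conditional law of $\Delta L_n$ as exponential with mean $a_{n+1}$, these increments being conditionally independent (the excursions of height at least $a_{n+1}$ form, in the local time scale, a Poisson process of rate $1/a_{n+1}$).

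Next I convert the count into a Stieltjes integral. Let $\phi$ denote the step function equal to $1/a_{n+1}$ on $[T_n,T_{n+1})$. Because $\dd L$ is carried by these intervals, $\int_0^{T_N}\phi\,\dd L=\sum_{n\in\mathcal Z,\,n<N}\Delta L_n/a_{n+1}$ is a sum of conditionally independent standard exponential variables, so it differs from $Z(N)=\#\{n\in\mathcal Z:1\le n\le N\}$ by a centered sum of conditional variance $Z(N)+O(1)$. Since $\E(Z(n))=\sum_{j\le n}\P(S(j)=0)=O(\sqrt n)$ by the local limit theorem in the diffusive regime, this discrepancy is $o(\sqrt N)$ in probability, and $Z(N)$ shares the scaling limit of $\int_0^{T_N}\phi\,\dd L$. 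I then replace the random weight $\phi$ and the random endpoint $T_N$ by their deterministic counterparts: the weight $w(s)\coloneqq(\beta s)^{-(1-2p)/\beta}$ with $\beta\coloneqq3-4p$ and the endpoint $A_N$. This rests on the clock estimate $T_n\sim A_n\sim\beta^{-1}n^{\beta}$ coming from \eqref{E:estab'}, obtained by showing $\E(T_n)=\sum_{j\le n}\E((\Delta M(j))^2)\sim A_n$ — the drift term in Lemma~\ref{L0}(ii) being negligible by \eqref{E:elemb} and the diffusive bound on $\E(S(j)^2)$ — together with a concentration estimate giving $T_n/A_n\to1$ in probability; on $[T_n,T_{n+1})$ one then has $w(s)\approx n^{-(1-2p)}\approx 1/a_{n+1}$.

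Finally I identify the limit by Brownian scaling. With $c\coloneqq N^{\beta/2}/\sqrt\beta$, the process $(c^{-1}B(c^2\cdot))$ is again a Brownian motion whose local time is $(c^{-1}L(c^2\cdot))$, so the substitution $s=c^2u$ gives, in distribution, $\frac{1}{\sqrt N}\int_0^{A_{Nt}}w(s)\,\dd L(s)=\frac{1}{\sqrt\beta}\int_0^{A_{Nt}/c^2}u^{(2p-1)/\beta}\,\dd L(u)$; here the powers of $N$ cancel exactly because $\beta(1-2\alpha)=1$ for $\alpha\coloneqq(1-2p)/\beta$, and the prefactor collapses to $\beta^{-1/2}$ since $-\alpha-(1-2\alpha)/2=-1/2$. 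As $A_{Nt}/c^2\to t^{\beta}$ and $u\mapsto\int_0^u v^{(2p-1)/\beta}\,\dd L(v)$ is a.s.\ continuous (the singularity at $0$ being integrable, as noted before \eqref{E:defLambda}), letting $N\to\infty$ yields $\frac{1}{\sqrt N}Z(Nt)\to H(t)$; the joint convergence over finitely many times follows because a single scaling $c=c(N)$ is used simultaneously for all of them. I expect the main obstacle to be the passage from $\phi$ to $w$: it requires a clock concentration $T_n\approx A_n$ sharp enough to survive integration against $\dd L$, and a separate argument that the contribution of small $s$, where $w$ is singular and the asymptotics $a_n\sim n^{1-2p}$ fail, is asymptotically negligible.
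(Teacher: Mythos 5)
Your proposal is correct in outline and shares the paper's overall architecture (Brownian embedding, excursion theory, clock asymptotics, Brownian scaling), but it handles the central counting step by a genuinely different mechanism. The paper introduces the step process $\alpha$ (your $1/\phi$), identifies $Z(n)$ as the number of excursion intervals in $[0,T_n]$ whose height exceeds $\alpha$ at their left endpoint (Lemma \ref{Lcount}), and then invokes the Meyer--Papangelou time-change theorem to view this count as a standard Poisson process run at the compensator clock, so that the law of large numbers gives $Z(nt)\sim\int_0^{T_{\lfloor nt\rfloor}}\dd L(s)/\alpha(s)$ almost surely (Lemma \ref{Lasszeta}). You instead decompose $L(T_N)$ into the increments $\Delta L_n$ attached to the zeros, note that each normalized increment $\Delta L_n/a_{n+1}$ is a standard exponential independent of the past (first-passage local time, consistent with the intensity $h^{-2}\,\dd s\,\dd h$), and conclude $\int_0^{T_N}\phi\,\dd L=Z(N)+O_P\bigl(\sqrt{Z(N)}\bigr)$ by a Wald-type variance bound. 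These are two faces of the same excursion-theoretic fact; yours is more elementary (no appeal to Papangelou's theorem) and quantitative (fluctuations of order $N^{1/4}$), but it requires an a priori bound $Z(N)=O_P(\sqrt N)$, which you import from a local limit theorem --- an external input the paper never needs, since the Poisson LLN route yields the equivalence with no prior control on $Z$; alternatively your own identity can be bootstrapped, as it forces $Z(N)\leq\int_0^{T_N}\phi\,\dd L+O_P\bigl(\sqrt{Z(N)}\bigr)$. The obstacle you flag at the end --- replacing $\phi$ by the deterministic weight $w$ and $T_N$ by $A_N$ --- is exactly what the paper resolves by citing the almost-sure estimate $T_n\sim n^{3-4p}/(3-4p)$ from \cite{ColGavSch2}, whence $1/\alpha(s)\sim w(s)$ a.s. (Lemma \ref{Lassalpha}), followed by monotonicity of $L$ and dominated convergence, the small-$s$ singularity being harmless since $(1-2p)/(3-4p)\leq 1/3<1/2$; your plan to establish the clock concentration via $\E(T_n)\sim A_n$ plus a deviation estimate is also viable and is essentially what Proposition \ref{P1} carries out (in the harder, uniform-in-$k$ form). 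Your final scaling computation agrees with the paper's.
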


\begin{remark}\label{Rema}  An easy computation based on the Fubini theorem shows that $\E({H}(t))=\sqrt{(6-8p)t/\pi}$, which is a decreasing function of the memory parameter $p$. This observation modulates a comment made in the introduction. Even though the memory parameter $p$ does not affect the growth exponent of the number of zeros of an elephant random walk, it does impact the scaling limit. On average, the latter decays as $p$ increases,  as it should expected from the dynamics. 
\end{remark}

The rest of this section is devoted to the proof of Theorem \ref{TZ}, which relies crucially on Brownian excursion theory. Our approach is related to David Williams' slick 
argument for establishing L\'evy's downcrossing Theorem, see \cite{Williams} and \cite[Exercise 1.19 on page 233]{RY}.

Recall that a compact time interval $[\ell,r]$ with $0< \ell < r$ is said to be an excursion interval (of $B$ aways from $0$) if and only $B(\ell)=B(r)=0$ and $B(t)\neq 0$ for all $t\in(\ell,r)$. 
We deduce from Lemma \ref{L5pre} that for any excursion interval $[\ell,r]$, either its right-extremity is given by $r=T_j$ for some integer  $j\geq 1$ in $\mathcal{ Z}$ (i.e. $j$ is a zero of $S$), in which case we say that $[\ell,r]$ \textit{counts},  or the whole excursion interval $[\ell,r]$ is contained in an open interval $(T_j, T_{j+1})$ for some $j\in \mathcal{ Z}$, in which case we say that $[\ell,r]$ \textit{does not count} (observe that, since each $T_j$ is a stopping time, the strong Markov property entails that if $B(T_j)=0$, then $B$ returns to $0$ immediately after time $T_j$ and hence $T_j$ is never the left-extremity of an excursion interval). This enables us to identify $Z(n)$ as the number of excursion intervals that count and are included in $[0,T_n]$.

In order to resolve the alternative of whether an excursion interval counts or not, we consider the partition of $\R_+$ induced by the sequence of stopping times $(T_n)_{n\geq 0}$
and define a right-continuous step process $(\alpha(t))_{t\geq 0}$  such that
$$\alpha(t)=a_{n+1} \qquad \text{for all } t\in[T_n, T_{n+1}).$$
Plainly, $\alpha$ is adapted to the Brownian filtration. 
Observe from the very construction of the embedding \eqref{E:defT}  that for every $j\in \mathcal{ Z}$ and every excursion interval $[\ell,r]\subset (T_j, T_{j+1})$, 
we have $\alpha(\ell)=a_{j+1}$ and  $|B(t)|<\alpha(\ell)$ for all $t\in [\ell,r]$.
Conversely, if $[\ell,r]$ is an excursion interval with $\max_{t\in[\ell,r]}|B(t)| \geq  \alpha(\ell)$, then there exists no $j\in \mathcal{ Z}$
such that $[\ell,r]\subset (T_j, T_{j+1})$, and thus $[\ell,r]$ counts.
We can now summarize this discussion in the following statement.

\begin{lemma} \label{Lcount} For every $n\geq 0$, $Z(n)$ coincides with the number of excursion intervals $[\ell,r]$ in $[0,T_n]$ such that 
$\max_{t\in[\ell,r]}|B(t)| \geq  \alpha(\ell)$. 
\end{lemma}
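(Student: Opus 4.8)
The plan is to combine the two inclusions from Lemma \ref{L5pre} with the structural description of the embedding to turn the counting of zeros of $S$ into a counting of excursion intervals satisfying a height condition against the step process $\alpha$. By the discussion immediately preceding the statement, $Z(n)$ equals the number of excursion intervals that \emph{count} and are contained in $[0,T_n]$, so it suffices to verify that an excursion interval $[\ell,r]\subset[0,T_n]$ counts if and only if $\max_{t\in[\ell,r]}|B(t)|\geq \alpha(\ell)$. I would establish the two directions of this equivalence separately.

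\medskip

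For one direction, suppose $[\ell,r]$ counts, meaning $r=T_j$ for some $j\in\mathcal{Z}$. Since $\ell<r=T_j$ and $\ell\geq 0$, the point $\ell$ lies in some interval $[T_{n'},T_{n'+1})$ with $n'<j$; in fact, because $B(\ell)=0$ while the embedding forces $B(T_m)=0$ exactly when $m\in\mathcal{Z}$, and the open excursion interval contains no zero of $B$, one checks that $\ell=T_i$ for some $i\in\mathcal{Z}$ with $i<j$ and that $[\ell,r]$ spans at least one full embedding step. The key is then to rule out the possibility that the trajectory on $[\ell,r]$ stays confined below the relevant threshold: if it did, every crossing would be an ordinary embedding step and the interval would not reach a new zero until $T_j$, but the confinement analysis from Lemma \ref{L5pre} (the interval $I_{k,n+1}$ construction) shows that staying strictly inside $\alpha(\ell)$ in absolute value is incompatible with the walk leaving the state $\pm1$ and returning to $0$ at time $j$. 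Concretely, I would argue the contrapositive below together with this direction.

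\medskip

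For the converse — which I expect to carry the real content — I would use precisely the observation recorded just before the lemma: for every $j\in\mathcal{Z}$ and every excursion interval $[\ell,r]\subset(T_j,T_{j+1})$ one has $\alpha(\ell)=a_{j+1}$ and $|B(t)|<\alpha(\ell)$ throughout $[\ell,r]$. Thus if an excursion interval satisfies $\max_{t\in[\ell,r]}|B(t)|\geq\alpha(\ell)$, it cannot be contained in any single gap $(T_j,T_{j+1})$ with $j\in\mathcal{Z}$, so by the dichotomy (every excursion interval either counts or is contained in such a gap, which is itself a consequence of the right-hand inclusion in Lemma \ref{L5pre}) the interval must count. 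This is the cleaner half. The main obstacle is therefore the first direction: one must verify that a counting excursion interval genuinely attains height at least $\alpha(\ell)$, which requires tracking how $\alpha$ is constant equal to $a_{i+1}$ on the step $[T_i,T_{i+1})$ containing $\ell$ and comparing this with the amplitude the Brownian path must accumulate between two consecutive zeros of $S$.

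\medskip

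Having established the equivalence pointwise for each excursion interval, the conclusion is immediate: the map $[\ell,r]\mapsto$ (its status as counting) is exactly the indicator of the event $\max_{t\in[\ell,r]}|B(t)|\geq\alpha(\ell)$, so summing over all excursion intervals inside $[0,T_n]$ yields $Z(n)$ on one side and the asserted count on the other. The only care needed is at the endpoints $\ell=0$ and $r=T_n$, where one should confirm, using $a_0=0$ and the convention fixing the behaviour at $0$, that no boundary excursion is miscounted; this is routine given the definitions in \eqref{E:defa} and \eqref{E:defT}.
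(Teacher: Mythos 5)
Your ``converse'' half is correct and is exactly the paper's argument: by the dichotomy (every excursion interval either counts or is contained in a gap $(T_j,T_{j+1})$ with $j\in\mathcal{Z}$) and the observation that an excursion contained in such a gap satisfies $|B(t)|<a_{j+1}=\alpha(\ell)$ throughout, any excursion with $\max_{t\in[\ell,r]}|B(t)|\geq\alpha(\ell)$ must count. The genuine gap is the other implication --- a counting excursion attains height at least $\alpha(\ell)$ --- which you yourself call ``the main obstacle'' but never establish. Deferring it to ``the contrapositive below'' is a logical error: your converse paragraph proves (does not count $\Rightarrow\max<\alpha(\ell)$), whose contrapositive is precisely the implication you already have, not the one you are missing; the two directions are independent statements. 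Worse, your attempted argument rests on a false claim: the left endpoint of a counting excursion is \emph{not} of the form $T_i$. If $i\in\mathcal{Z}$ then $B(T_i)=0$, and by the strong Markov property the zeros of $B$ accumulate immediately to the right of $T_i$; since the counting excursion ends at some $T_j\geq T_{i+1}$, its left endpoint $\ell$, being the last zero of $B$ before $T_j$, is almost surely strictly larger than $T_i$. Knowing $B(\ell)=0$ does not place $\ell$ among the countably many embedding times, and Lemma \ref{L5pre} only gives $\ell\in[T_i,T_{i+1})$ for some $i\in\mathcal{Z}$.

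The missing step is short once set up correctly. Suppose $[\ell,r]$ counts, say $r=T_j$ with $j\in\mathcal{Z}$. By Lemma \ref{L5pre}, $\ell\in[T_i,T_{i+1})$ for some $i\in\mathcal{Z}$, so $\alpha(\ell)=a_{i+1}$ and $B(T_i)=0$. Since $r=T_j>T_i$, we have $j\geq i+1$, hence $T_{i+1}\in(\ell,r]$. But the embedding step \eqref{E:defT} started from $B(T_i)=0$ exits the symmetric interval $(-a_{i+1},a_{i+1})$ exactly at time $T_{i+1}$, so $|B(T_{i+1})|=a_{i+1}=\alpha(\ell)$, and therefore $\max_{t\in[\ell,r]}|B(t)|\geq\alpha(\ell)$. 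This is the point the paper absorbs into the phrase ``from the very construction of the embedding'' (via the intervals $I_{k,n+1}$ in the proof of Lemma \ref{L5pre}); with it, the two families of excursion intervals coincide and the lemma follows by counting. Note that this argument needs only $\ell\in[T_i,T_{i+1})$, never $\ell=T_i$, so your false identification is both unnecessary and unfixable as stated.
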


We shall also need  the following estimates for the asymptotic behavior of $T_n$ and $\alpha(t)$ which we essentially  lift from \cite{ColGavSch2}.

\begin{lemma} \label{Lassalpha} The following asymptotic  equivalences hold $\P$-almost-surely:
\begin{enumerate}
\item[(i)] $T_n \sim n^{3-4p}/(3-4p)$, as ${n\to \infty}$,
\item[(ii)] $\alpha(t)  \sim  ((3-4p)t)^{(1-2p)/(3-4p)}$ as ${t\to \infty}$.
\end{enumerate}
\end{lemma}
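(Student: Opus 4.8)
The plan is to analyze the increments $T_{n+1}-T_n$ through the Doob decomposition of $(T_n)$, establish (i) as a strong law, and then obtain (ii) by inverting the asymptotics of $T_n$. Throughout I work under $\P$ (i.e.\ $k=0$), write $M=M_0$, and set $\mu_n\coloneqq \frac{1-2p}{n+2p-1}M(n)$ and $d_n^\pm\coloneqq a_{n+1}\pm\mu_n$, which by \eqref{E:elemb} satisfy $0\le d_n^\pm\le 2a_{n+1}$. Conditionally on $\calF_{T_n}$, the increment $T_{n+1}-T_n$ is the exit time of a Brownian motion started at $0$ from the interval $(-d_n^-,d_n^+)$; applying optional stopping to the martingales $B(t)$ and $B(t)^2-t$ gives the exact identity
$$\E(T_{n+1}-T_n\mid\calF_{T_n})=d_n^-d_n^+=a_{n+1}^2-\Big(\frac{1-2p}{n+2p-1}\Big)^2 M(n)^2 .$$
Summing over $n$, the compensator of $(T_N)$ equals $A_N-G_N$ with $A_N$ as in \eqref{E:defA} and $G_N\coloneqq\sum_{n=0}^{N-1}(\frac{1-2p}{n+2p-1})^2M(n)^2\ge 0$, so that $\mathcal{M}_N\coloneqq T_N-A_N+G_N$ is a martingale.

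First I would dispose of $G_N$. When $p=1/2$ it vanishes identically. Otherwise Lemma~\ref{L0}(iii) with $q=1$ gives $\E(M(n)^2)\le c\,n^{3-4p}$, whence $\E(G_N)\le c\sum_{n\ge 1}n^{1-4p}$. For $p>1/2$ this series converges, so $G_\infty<\infty$ a.s.\ and $G_N=O(1)$; for $p<1/2$ it is $\sim c\,N^{2-4p}=o(N^{3-4p})$. Since $G_N$ is nondecreasing, the almost sure bound $G_N=o(N^{3-4p})$ then follows in this last regime by applying Markov's inequality along a geometric subsequence $N_m=\lfloor\rho^m\rfloor$ together with Borel--Cantelli and monotonicity, and letting $\rho\downarrow 1$ and the threshold $\downarrow 0$ afterwards. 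In all cases $G_N=o(N^{3-4p})$ a.s.

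Next I would control $\mathcal{M}_N$. Comparing the exit time from $(-d_n^-,d_n^+)$ with that from the larger symmetric interval $(-2a_{n+1},2a_{n+1})$ yields the deterministic bound $\E((\Delta\mathcal{M}_n)^2\mid\calF_{T_n})=\Var(T_{n+1}-T_n\mid\calF_{T_n})\le c\,a_{n+1}^4$. Taking $b_N\coloneqq N^{3-4p}\uparrow\infty$ and using \eqref{E:estab}, one gets $\sum_n b_{n+1}^{-2}\,\E((\Delta\mathcal{M}_n)^2\mid\calF_{T_n})\le c\sum_n (n+1)^{-2}<\infty$, so $\sum_n\Delta\mathcal{M}_n/b_{n+1}$ is an $L^2$-bounded martingale and converges a.s.; Kronecker's lemma then gives $\mathcal{M}_N=o(N^{3-4p})$ a.s. Combining the three estimates with $A_N\sim N^{3-4p}/(3-4p)$ from \eqref{E:estab'} yields $T_N\sim N^{3-4p}/(3-4p)$, which is (i).

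Finally, (ii) follows by inversion. Given $t$, let $n(t)$ be the index with $t\in[T_{n(t)},T_{n(t)+1})$; since $T_n\to\infty$ we have $n(t)\to\infty$ as $t\to\infty$, and sandwiching $t$ between $T_{n(t)}\sim n(t)^{3-4p}/(3-4p)$ and $T_{n(t)+1}\sim (n(t)+1)^{3-4p}/(3-4p)$ gives $n(t)\sim((3-4p)t)^{1/(3-4p)}$ a.s. As $\alpha(t)=a_{n(t)+1}\sim(n(t)+1)^{1-2p}\sim n(t)^{1-2p}$ by \eqref{E:estab}, substituting the asymptotics of $n(t)$ yields $\alpha(t)\sim((3-4p)t)^{(1-2p)/(3-4p)}$, which is (ii). The main obstacle is the almost sure (rather than in-expectation) control of the compensator correction $G_N$ in the regime $p<1/2$, where it is not summable; the nonnegativity and monotonicity of $G_N$ are exactly what make the geometric-subsequence argument go through.
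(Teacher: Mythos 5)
Your proof is correct, and for part (i) it takes a genuinely different route from the paper: there, the asymptotic $T_n \sim n^{3-4p}/(3-4p)$ is not proved but simply quoted as a translation of Equation (12) of \cite{ColGavSch2}, whereas you give a self-contained argument via the Doob decomposition $T_N = A_N - G_N + \mathcal{M}_N$. It is worth noting that your decomposition is precisely the one the paper itself introduces later, in Section 4: your correction term $G_N$ is the paper's $V_k(N)$ and your martingale $\mathcal{M}_N$ is the paper's $N_k(N)$, specialized to $k=0$ (compare Lemmas \ref{L3} and \ref{L2} and Proposition \ref{P1}). The difference is one of strength and of tools: the paper needs uniform-in-$k$ concentration bounds with arbitrarily fast polynomial decay, and therefore controls all moments via the Burkholder--Davis--Gundy inequality and Latała's bound, while for the purely almost-sure statement of Lemma \ref{Lassalpha} your softer arguments suffice --- a conditional-variance bound $\Var(T_{n+1}-T_n \mid \calF_{T_n}) \le c\, a_{n+1}^4$, an $L^2$-bounded martingale plus Kronecker's lemma for $\mathcal{M}_N$, and Markov/Borel--Cantelli along a geometric subsequence (exploiting monotonicity) for the nonnegative increasing term $G_N$. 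Your approach buys self-containedness at the cost of partially duplicating, in weaker form, machinery that appears later anyway; the paper's citation keeps this section short and defers the quantitative estimates to where they are genuinely needed. Your proof of (ii) --- inverting the asymptotics of $T_n$ through the index $n(t)$ defined by $t \in [T_{n(t)}, T_{n(t)+1})$ and using $\alpha(t) = a_{n(t)+1}$ together with \eqref{E:estab} --- is the same as the paper's.
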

\begin{proof} The first limit is a translation of \cite[Equation (12)]{ColGavSch2}. Then define the inverse map $T^{-1}: \R_+\to \N$ by   $T^{-1}(t)=n+1$ for $T_n\leq t < T_{n+1}$. We deduce from (i) that 
$$ T^{-1}(t) \sim ((3-4p)t)^{1/(3-4p)} \qquad \text{a.s. as }{t\to \infty}$$
The second limit now follows from the identity $\alpha(t)=a_{T^{-1}(t)}$ and \eqref{E:estab}. 
\end{proof}

We continue by recalling some elements of It\^{o}'s excursion theory which will be useful to the proof of Theorem \ref{TZ}, referring to Chapter XII in \cite{RY} for background. 
The (right-continuous) inverse local time process
$$\lambda(s)\coloneqq \inf\{t\geq 0: L(t)>s\}, \qquad s\geq 0,$$
is a stable subordinator with index $1/2$; it enables us to identify the family of the excursion intervals of $B$ as
$$\left\{\left[ \lambda(s-), \lambda(s)\right]: s>0 \text{ with } \Delta \lambda(s)\coloneqq \lambda(s)- \lambda(s-)>0\right\}.$$
We write 
$$h_s\coloneqq \max_{t\in [\lambda(s-), \lambda(s)] } |B(t)|$$
for the (absolute) height of such excursions. It is then a well-known fact from It\^{o}'s excursion theory that
 $\{(s, h_s):  \Delta \lambda(s)>0\}$ is the family of the atoms of a Poisson point process on $\R_+\times \R_+$ with intensity $h^{-2} \dd s \dd h$. 
 
Excursion theory incites us to introduce the counting process
$$ \nu(t)\coloneqq  \mathrm{Card}(\{0<s\leq t: h_s\geq \alpha(\lambda(s-)\}), \qquad t\geq 0.$$
 
 \begin{lemma}\label{Lasszeta} We have with probability one that
 $$ \nu(t) \sim \int_0^t \frac{ \dd s}{\alpha(\lambda(s))} \qquad \text{as }t\to \infty.$$
 \end{lemma}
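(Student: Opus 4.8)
The plan is to identify the predictable compensator of the counting process $\nu$ and then to invoke a strong law of large numbers for it. I would work with the filtration $(\mathcal{F}_s)_{s\geq 0}$ generated by the excursion point process, and first observe that the threshold $\alpha(\lambda(s-))$ is $\mathcal{F}_{s-}$-predictable: indeed $\lambda(s-)$, being the sum of the lengths of the excursions occurring strictly before local time $s$, is a functional of the past, and $\alpha$ is then evaluated at it. Recall that $\{(s,h_s):\Delta\lambda(s)>0\}$ is a Poisson point process with intensity $h^{-2}\,\dd s\,\dd h$, so that its height marginal satisfies $n(h\geq a)=\int_a^\infty h^{-2}\,\dd h=1/a$. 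Applying the predictable compensation (master) formula of excursion theory (see Chapter XII in \cite{RY}) to the predictable functional $\mathbf{1}_{\{h_s\geq \alpha(\lambda(s-))\}}$, I obtain that
$$M(t)\coloneqq \nu(t)-C(t),\qquad C(t)\coloneqq \int_0^t \frac{\dd s}{\alpha(\lambda(s-))},$$
is a martingale; since $\nu$ has unit jumps and $C$ is continuous, its predictable quadratic variation is $\langle M\rangle = C$. Moreover $\alpha(\lambda(s-))=\alpha(\lambda(s))$ outside the countably many jump times of $\lambda$, so $C(t)=\int_0^t \dd s/\alpha(\lambda(s))$, which is precisely the right-hand side in the statement. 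Making this compensation step fully rigorous, in particular the predictability of the threshold relative to the excursion filtration, is the main obstacle; everything afterwards is comparatively routine.

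The next step is to check that $C(\infty)=\infty$ almost surely, which is what makes the law of large numbers effective. The occupation formula $\int_0^\infty g(\lambda(s))\,\dd s=\int_0^\infty g(u)\,\dd L(u)$, valid because $\lambda$ is the right-continuous inverse of $L$, recasts this as $C(\infty)=\int_0^\infty \dd L(u)/\alpha(u)$. By Lemma \ref{Lassalpha}(ii) one has $\alpha(u)\sim c\,u^{\beta}$ with $\beta\coloneqq (1-2p)/(3-4p)$, and a one-line computation gives $\beta<1/2$. When $p\geq 1/2$ then $\beta\leq 0$, so $1/\alpha(u)$ is bounded below by a positive constant and divergence is immediate. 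The only delicate case is $0<p<1/2$, where it suffices to show $Y\coloneqq\int_0^\infty u^{-\beta}\,\dd L(u)=\infty$ a.s. Here I would exploit Brownian scaling: since $\{L(cu)\}_{u\geq0}\egaldistr\{c^{1/2}L(u)\}_{u\geq0}$, a change of variables yields $Y\egaldistr c^{\beta-1/2}Y$ for every $c>0$. As $\beta-1/2\neq 0$, this scaling invariance forces the distribution function of $Y$ to be constant on $(0,\infty)$, so $Y\in\{0,\infty\}$ a.s.; and $Y>0$ a.s. because $L$ is eventually positive and increasing, whence $Y=\infty$ a.s.

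Finally, I would conclude by a time-change and a discrete strong law. Since $C$ is continuous, strictly increasing and unbounded, set $\tau_k\coloneqq\inf\{t\geq 0:C(t)=k\}$; then $(M(\tau_k))_{k\geq 0}$ is a martingale whose increments satisfy, by optional stopping applied to $M^2-\langle M\rangle$,
$$\E\!\left((M(\tau_{k+1})-M(\tau_k))^2\mid\mathcal{F}_{\tau_k}\right)=\E\!\left(C(\tau_{k+1})-C(\tau_k)\mid\mathcal{F}_{\tau_k}\right)=1.$$
Kolmogorov's strong law for square-integrable martingales (the conditional variances summed against $k^{-2}$ converge) then gives $M(\tau_k)/k\to 0$ a.s., that is $\nu(\tau_k)/k\to 1$. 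Because $\nu$ and $C$ are nondecreasing, $C(\tau_k)=k$ and $C(\tau_{k+1})/C(\tau_k)\to 1$, a standard sandwiching of an arbitrary $t$ between consecutive $\tau_k$ upgrades this to $\nu(t)/C(t)\to 1$ a.s. as $t\to\infty$, which is exactly the asserted equivalence $\nu(t)\sim\int_0^t \dd s/\alpha(\lambda(s))$.
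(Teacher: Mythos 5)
Your proof is correct, and its first step coincides with the paper's: both identify $C(t)=\int_0^t \dd s/\alpha(\lambda(s-))$ as the compensator of $\nu$ via the Poisson point process description of excursion heights (the paper, like you, leaves the predictability of $s\mapsto\alpha(\lambda(s-))$ implicit; your observation that this is the only genuinely delicate point is accurate, and it holds because $\lambda(s-)$ is an increasing left-continuous family of stopping times and $\alpha$ is right-continuous and adapted). Where you diverge is the conclusion. The paper invokes the Meyer--Papangelou time-change theorem (\cite{Meyer}, \cite{Papangelou}, or Corollary 25.26 in \cite{Kal}): since $\nu^c(\infty)=\infty$ a.s., the counting process $\nu$ is a standard Poisson process time-changed by $\nu^c$, and the equivalence follows from the law of large numbers for the Poisson process. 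You instead run a bare-hands martingale argument: from $\langle M\rangle=C$ (valid since $\nu$ has unit jumps and $C$ is continuous), stopping at $\tau_k=\inf\{t: C(t)=k\}$ gives increments of conditional variance $1$, Kolmogorov's strong law for square-integrable martingales yields $\nu(\tau_k)/k\to 1$, and monotone sandwiching upgrades this to $\nu(t)\sim C(t)$. Your route is more elementary and self-contained, at the cost of some routine optional-stopping bookkeeping; the paper's route is shorter and yields the stronger structural fact that $\nu$ is exactly a time-changed Poisson process. Your verification that $C(\infty)=\infty$ is also more detailed than the paper's one-line assertion from Lemma \ref{Lassalpha}(ii) and stability of $\lambda$: the change of variables \eqref{E:change} reduces it to $\int_0^\infty \dd L(u)/\alpha(u)=\infty$, and your scaling zero-one argument for $Y=\int_0^\infty u^{-\beta}\,\dd L(u)$ is sound (one cosmetic slip: the substitution gives $Y\egaldistr c^{1/2-\beta}Y$ rather than $c^{\beta-1/2}Y$, which is immaterial since $c>0$ is arbitrary and $\beta\neq 1/2$). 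Altogether a valid proof with the same compensator-plus-LLN skeleton but a different, more hands-on final lemma.
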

\begin{proof}
The elements of excursion theory which we recalled above entail that
 $$ \nu^c(t) \coloneqq \int_0^t \frac{ \dd s}{\alpha(\lambda(s))}, \qquad t\geq 0,$$
 is the so-called compensator of the counting process $\nu$. It is readily seen from Lemma \ref{Lassalpha}(ii)  and the fact that the inverse local time $\lambda$ is a stable subordinator
 with index $1/2$ that $\lim_{t\to \infty} \nu^c(t)=\infty$ a.s. 
 By a classical result due to Meyer \cite{Meyer} and Papangelou \cite{Papangelou} (see also Corollary 25.26 in \cite{Kal}), we then know that the counting process $\nu$ can be seen as some standard Poisson process time-changed by $\nu^c$. Our claim then follows from the law of large numbers for the Poisson process.
\end{proof}

\begin{proof}[Proof of Theorem \ref{TZ}]
In the previous notation, Lemma \ref{Lcount} translates into the identity
$Z(nt) = \nu(L(T_{\lfloor nt\rfloor}))$, and then Lemma \ref{Lasszeta} yields
$$Z(nt)\sim \int_0^{L(T_{\lfloor nt\rfloor})} \frac{ \dd s}{\alpha(\lambda(s))} =  \int_0^{T_{\lfloor nt\rfloor}} \frac{ \dd L(s)}{\alpha(s)}.$$
More precisely, the second equality follows from the change of variable formula 
\begin{equation}\label{E:change}
\int_0^t f(s) \dd L(s)=\int_0^{L(t)} f(\lambda(u)) \dd u \qquad \text{a.s.}
\end{equation}
where $f:\R_+\to \R_+$ stands  for a generic measurable function. Indeed, $L$ being continuous, one has $L\circ \lambda = \mathrm{Id}$,
and one deduces by a monotone class argument that the Lebesgue measure $\dd u$  on $\R_+$ is the pushforward measure of the Stieltjes measure $\dd L(s)$
by the map $\lambda$. (Beware that the role of $L$ and $\lambda$ cannot be interchanged, the Lebesgue measure $\dd u$ is not the pushforward measure of  $\dd \lambda(s)$ by $L$.)

Next we readily deduce from Lemma \ref{Lassalpha}(i) and an argument of monotonicity that as $n\to \infty$, 
$$ \int_0^{T_{\lfloor nt\rfloor}} \frac{ \dd L(s)}{\alpha(s)} \sim  \int_0^{(tn)^{3-4p}/(3-4p)} \frac{ \dd L(s)}{\alpha(s)} 
= \int_0^{t^{3-4p}} \frac{ \dd L(sn^{3-4p}/(3-4p))}{\alpha(sn^{3-4p}/(3-4p))}.$$
On the one hand, we also know from Lemma \ref{Lassalpha}(ii) that almost surely
$$\alpha(sn^{3-4p}/(3-4p)) \sim n^{1-2p} s ^{(1-2p)/(3-4p)} \qquad \text{as }n\to \infty.$$
On the other hand,  the Brownian scaling property entails that there is  the identity in distribution 
$$\left( L(sn^{3-4p}/(3-4p))\right)_{s\geq 0} \egaldistr \sqrt{\frac{n^{3-4p}}{3-4p}} (L(s))_{s \geq 0}.$$

We stress that $(1-2p)/(3-4p)\leq 1/3<1/2$. It follows easily from dominated convergence that as $n\to \infty$, there is the convergence in the sense of finite dimensional  distributions (in the parameter $t$)
$$ n^{-1/2}  \int_0^{t^{3-4p}} \frac{ \dd L(sn^{3-4p}/(3-4p))}{\alpha(sn^{3-4p}/(3-4p))} \Longrightarrow   \frac{1}{\sqrt{3-4p}}  \int_0^{t^{3-4p}}  s^{(2p-1)/(3-4p)} \dd L(s).$$
Putting the pieces together, this completes the proof of our claim. 
\end{proof}

We now conclude this section with some comments about 
a related but easier result, namely the  convergence in distribution  on the space of closed subsets of $\R_+$ endowed with the so-called Fell-Matheron topology:
\begin{equation}\label{E:Fell}
\lim_{n\to \infty} \{n^{-1}j^{3-4p}: j\in \mathcal Z\} \egaldistr \{t\geq 0: B(t)=0\}.
\end{equation}
This claim can  easily be deduced  from the  estimate for $T_n$ of Lemma \ref{Lassalpha}(i), the inclusions of Lemma \ref{L5pre}
and the Brownian scaling property. It can also be deduced from the scaling limit theorem for the elephant itself
$$\lim_{n\to \infty} \frac{1}{\sqrt n} \left( S(\lfloor nt \rfloor)\right)_{t\geq 0} \egaldistr  \frac{1}{\sqrt{3-4p}} \left( t^{2p-1} B(t^{3-4p})\right)_{t\geq 0},$$
see \cite[Theorem 1]{BaurBer}.
 We stress that although Theorem \ref{TZ} and \eqref{E:Fell} are obviously related, one cannot deduce the former from the latter as the functional that counts elements in closed subsets is not continuous for the Fell-Matheron topology. Actually \eqref{E:Fell} could even be misleading as it might suggests that the weak limit in Theorem \ref{TZ} should be the process
$\left(L(t^{1/(3-4p)})\right)_{t\geq 0}$   (because, roughly speaking, the Brownian local time process $L$ is the natural measure on the zero set of $B$), which is false, except of course for $p=1/2$ when the elephant random walk coincides with the usual simple symmetric random walk.
We refer to \cite[Appendix C]{Molchanov} for background on the Fell-Matheron topology and leave details of the proof of \eqref{E:Fell} to the interested reader; see also the forthcoming Remark \ref{R:Fell2}.

\section{Uniform tail estimates for return times}

Recall from \eqref{E:P_k}  that  $\P_k$ denotes the law of an elephant random walk started at time $k$ from $0$; let also 
\begin{equation} \label{E:defRk}
{R}\coloneqq \inf\{n\geq 1: S(n+k)=0\}
\end{equation} stand for the amount of  time the latter needs to first return to the origin. 
The purpose of this section is to establish uniform asymptotic estimates for the tail distributions of $R$ under $\P_k$.
Recall the definitions   \eqref{E:defa} and \eqref{E:defA},  and the estimates \eqref{E:estab}  and \eqref{E:estab'}.

\begin{theorem}\label{T1} For every fixed $b>0$, the convergence 
$$\lim_{n\to \infty}  \frac{\sqrt{A_{n+k}-A_{k}}}{a_{k+1}} \P_k({R}>n) =\sqrt{\frac{2}{\pi}}$$
holds uniformly in $k\leq bn$ (and $k\geq 2$ when $p=0$). 
\end{theorem}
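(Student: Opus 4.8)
The plan is to transfer the question about the return time $R$ of the elephant to a two-sided exit problem for the Brownian embedding of Lemma \ref{L0}. Recall from Lemma \ref{L5pre} and its proof that under $\P_k$ the identity $\mathcal Z_k=\{n\geq 0:B(T_{k,n})=0\}$ holds, and that during $[T_{k,n},T_{k,n+1})$ the Brownian path is confined to the interval $I_{k,n+1}$ of half-width $a_{k+n+1}$ centered at $a_{k+n}z$ where $M_k(n)=a_{k+n}z$. Starting from $S(k)=0$, i.e. $M_k(0)=0$, the very first step sends $B$ to $\pm a_{k+1}$ at time $T_{k,1}$, so $R>n$ essentially means that $B$ does not return to $0$ before time $T_{k,n}$. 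More precisely, using the confinement structure of the embedding, the event $\{R>n\}$ should be sandwiched between the events that the Brownian motion, started from level $a_{k+1}$ (in absolute value) at time $T_{k,1}$, stays away from $0$ up to time $T_{k,n}$. So the first step is to write, up to the $O(1)$ discrepancy coming from the endpoints,
\begin{equation}\label{E:plansand}
\P_k(R>n)\approx \P\bigl(B(t)\neq 0 \text{ for all } t\in[T_{k,1},T_{k,n}]\ \big|\ |B(T_{k,1})|=a_{k+1}\bigr).
\end{equation}

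The second step is to replace the random embedding time $T_{k,n}$ by a deterministic time, namely $A_{n+k}-A_k$, which is the natural clock since the quadratic variation of $M_k$ up to step $n$ is comparable to $A_{n+k}-A_k$ (see \eqref{E:defA} and the bound $|\Delta M_k(n+1)|\leq 2a_{k+n+1}$ from the proof of Lemma \ref{L0}). For this I would invoke the concentration inequalities that the introduction announces for this section: the Burkholder-Davis-Gundy inequalities together with Latała's moment bounds should give that $T_{k,n}$ concentrates around $A_{n+k}-A_k$ well enough, uniformly in $k\leq bn$, that the fluctuations are negligible on the relevant scale. Once the clock is deterministic, the right-hand side of \eqref{E:plansand} becomes a genuine Brownian survival probability: the chance that a standard Brownian motion started from a point at distance $a_{k+1}$ from $0$ avoids $0$ for a time of order $A_{n+k}-A_k$. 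By the reflection principle this equals
\begin{equation}\label{E:planrefl}
\P\Bigl(\max_{s\leq A_{n+k}-A_k}|B(s)|<a_{k+1}\ \text{from } 0\Bigr)=\P\bigl(|\mathcal N(0,A_{n+k}-A_k)|<a_{k+1}\bigr)\sim \sqrt{\tfrac{2}{\pi}}\,\frac{a_{k+1}}{\sqrt{A_{n+k}-A_k}},
\end{equation}
where the asymptotic uses that the ratio $a_{k+1}/\sqrt{A_{n+k}-A_k}\to 0$ uniformly for $k\leq bn$ (which follows from \eqref{E:estab} and \eqref{E:estab'}, since $a_{k+1}\sim k^{1-2p}$ while $\sqrt{A_{n+k}-A_k}$ is of order $n^{(3-4p)/2}$). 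This is exactly the claimed constant $\sqrt{2/\pi}$ and the normalizing factor $a_{k+1}/\sqrt{A_{n+k}-A_k}$.

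The main obstacle is making the heuristic \eqref{E:plansand} rigorous \emph{uniformly in $k$}, and controlling two distinct sources of error at once. First, the embedding is not a clean first-passage problem: the confinement intervals $I_{k,n}$ drift with $M_k$, and the drift term $\tfrac{1-2p}{k+n+2p-1}M_k(n)$ in Lemma \ref{L0}(ii) must be shown not to spoil the Gaussian computation; here the bound \eqref{E:elemb} that this drift is dominated by $a_{k+n+1}$ is the crucial lever, but turning it into a uniform two-sided estimate near the small probability $\P_k(R>n)\to 0$ is delicate. Second, and more seriously, one needs the deterministic-clock replacement to hold with a multiplicative error tending to $1$ uniformly in $k\leq bn$; since $\P_k(R>n)$ itself is small (of order $a_{k+1}/\sqrt{A_{n+k}-A_k}$), a naive additive concentration bound is useless, and one must show the relative fluctuation of $T_{k,n}-(A_{n+k}-A_k)$ is small compared to $A_{n+k}-A_k$ on an event of overwhelming probability, then absorb the complementary event into the error. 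This is precisely where the BDG and Latała moment estimates do the heavy lifting, and I expect the bulk of the technical work — and the reason the introduction flags a ``much finer analysis of the Brownian embedding'' — to lie in pushing these concentration bounds through uniformly in the joint regime $k\leq bn$, $n\to\infty$.
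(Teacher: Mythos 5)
Your proposal is correct and follows essentially the same route as the paper: there, Lemma \ref{L5pre} turns $\{R>n\}$ into the event that $B$ has no zero on $(T_{k,1},T_{k,n}]$, Lemma \ref{L5} gives your Gaussian asymptotics in the exact form $\lim_{t\to\infty}\sqrt{t}\,\P(T_{k,R}\geq a_{k+1}^2 t)=\sqrt{2/\pi}$, and the deterministic-clock replacement is precisely Proposition \ref{P1} (Burkholder-Davis-Gundy plus Lata{\l}a) combined with Lemma \ref{L:estab}, whose lower bound $A_{n+k}-A_k\geq c(n+k)^{3-4p}$ for $k\leq bn$ is what converts the additive concentration at scale $\varepsilon(k+n)^{3-4p}$ into the relative bound you rightly insist on, with the complementary event absorbed via the arbitrarily large power $r$. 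One notational slip: in your reflection-principle display the maximum should be of $B(s)$, not $|B(s)|$ (as written it is the two-sided confinement probability, which decays exponentially); the outer members of your chain are nevertheless equal, so the conclusion $\sqrt{2/\pi}\,a_{k+1}/\sqrt{A_{n+k}-A_k}$ stands.
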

Before tackling the proof of Theorem \ref{T1}, let us point out that, thanks to \eqref{E:estab'}, our earlier claim \eqref{E:T1} is merely a special case of  the latter for $k=0$. 
The proof of Theorem \ref{T1} relies crucially on uniform concentration estimates for  the distribution of the stopping times $T_{k,n}$ which
enable to embed the elephant random walk into a Brownian path in Section 2. These  are developed in the next section.

\subsection{Concentration estimates for the embedding}

Recall that the sequence $(A_n)$ has been defined in \eqref{E:defA}, and from Lemma \ref{Lassalpha}(i) that for $k=0$,  one has $T_{0,n}=T_n\sim A_n$. The purpose of this section is to establish the following uniform bound for the deviations. 
 We use here the notation $\P$ for the law of the Brownian motion $B$, which obviously does not depend on $k$ (but of course the embedding $(T_{k,n})_{n\geq 0}$ does). 

\begin{proposition} \label{P1}  For every $\varepsilon >0$, $r\geq 1$, and even number $k\geq 0$, one has:
$$ \P\left( \sup_{ \ell \leq n}\left | T_{k,\ell} -(A_{k+\ell}-A_k) \right| \geq  \varepsilon (k+n)^{3-4p} \right) \leq c_{\varepsilon,r} (k+n)^{-r}.$$
\end{proposition}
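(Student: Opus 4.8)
The plan is to control the fluctuations of $T_{k,n}$ by writing it as a sum of conditionally-independent-type increments whose conditional means sum to the target $A_{k+n}-A_k$, and then to apply a martingale concentration argument together with strong moment bounds. First I would compute, for the Brownian embedding \eqref{E:defT}, the conditional expectation of the $(n+1)$-th increment $\Delta T_{k,n+1}\coloneqq T_{k,n+1}-T_{k,n}$ given the past $\calF_{T_{k,n}}$. Since $B$ exits the interval $I_{k,n+1}$ at one of its two endpoints and $B(T_{k,n})=M_k(n)$, the optional-stopping identity for $(B(t)^2-t)$ gives
$$\E\left(\Delta T_{k,n+1}\mid \calF_{T_{k,n}}\right)=\E\left((\Delta M_k(n+1))^2\mid \calF_{T_{k,n}}\right),$$
and by Lemma \ref{L0}(ii) this conditional second moment equals $a_{k+n+1}^2$ plus a cross term of order at most $a_{k+n+1}\cdot|M_k(n)|/(k+n)$ coming from \eqref{E:elemb}. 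The leading contribution $\sum_{\ell=1}^{n} a_{k+\ell}^2=A_{k+n}-A_k$ is exactly the centering in the statement, so the proof reduces to bounding two pieces: the martingale $\sum_\ell\bigl(\Delta T_{k,\ell}-\E(\Delta T_{k,\ell}\mid\calF)\bigr)$, and the accumulated drift error coming from the cross terms.

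Next I would establish strong moment bounds on each increment. Using the scaling and exponential-moment properties of Brownian exit times from a symmetric interval of half-width $a_{k+n+1}$, together with \eqref{E:elemb} which guarantees the interval $I_{k,n+1}$ always contains a fixed fraction of room on at least one side, I expect $\E\bigl((\Delta T_{k,n+1})^q\mid\calF_{T_{k,n}}\bigr)\leq c_q\, a_{k+n+1}^{2q}$ for every $q\geq 1$. With \eqref{E:estab} this gives $a_{k+\ell}^{2}\leq c(k+\ell)^{2-4p}$, so the centered increments have all moments controlled by powers of $(k+n)^{2-4p}$. I would then handle the supremum over $\ell\leq n$ via the Burkholder–Davis–Gundy inequality (as already used in Lemma \ref{L0}) to pass from the martingale to its quadratic variation $\sum_\ell \E((\Delta T_{k,\ell})^2\mid\calF)$, and then apply Latała's universal moment bounds for sums of independent variables (cited in the introduction) to get a sharp high-moment estimate $\E\bigl(\sup_{\ell\leq n}|T_{k,\ell}-(A_{k+\ell}-A_k)|^{2q}\bigr)\leq c_q (k+n)^{(3-4p)\cdot 2q\cdot\theta}$ for a suitable exponent $\theta<1$. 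A Markov-inequality argument at a sufficiently high (but fixed, depending on $r$ and $\varepsilon$) moment order then converts this into the stated tail bound $c_{\varepsilon,r}(k+n)^{-r}$.

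The main obstacle I anticipate is the cross term $\tfrac{1-2p}{k+n+2p-1}M_k(n)$ appearing in the increment \eqref{E:defT}, because it makes the increments neither independent nor exactly centered, and because $|M_k(n)|$ can itself be as large as order $(k+n)^{(3-2p)\cdots}$—by Lemma \ref{L0}(iii), $\E(|M_k(n)|^{2q})\leq c_q(k+n)^{(3-4p)q}$. I would need to show that summing these drift errors $\sum_\ell a_{k+\ell}\,|M_k(\ell)|/(k+\ell)$ stays below $\varepsilon(k+n)^{3-4p}$ with overwhelming probability, uniformly in $k\leq bn$. The uniformity in $k$ is delicate: the normalization must be expressed throughout in terms of $(k+n)$ rather than $n$ alone, and the per-increment bounds must be summable against the target scale even when $k$ is comparable to $n$. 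I expect the key technical lemma to be a high-moment concentration bound for the weighted sum $\sum_\ell a_{k+\ell}M_k(\ell)/(k+\ell)$, again via BDG applied to $M_k$ and Latała's inequality, arranged so that the resulting exponent is strictly smaller than $3-4p$, thereby making the drift error negligible relative to the centering at every fixed $\varepsilon>0$.
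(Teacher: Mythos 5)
Your plan is essentially the paper's own proof: you decompose $T_{k,\ell}-(A_{k+\ell}-A_k)$ into a compensated martingale plus an accumulated drift correction (the paper's $N_k$ and $V_k$, built on the exit-time identity $\E(\tau(x,y))=x^2-y^2$), control the martingale by Burkholder--Davis--Gundy plus Latała's inequality and the correction by the moment bounds of Lemma \ref{L0}(iii), and finish with Markov's inequality at a sufficiently high moment. The one detail you must still make rigorous is the appeal to Latała, whose hypothesis is independence while the quadratic-variation terms are not independent: as in the paper, one first dominates each increment $\Delta T_{k,j}$ by a variable distributed as $a_{k+j}^2\,\tau(1,0)$ that is \emph{independent} of $\calF_{T_{k,j-1}}$ (possible precisely because your uniform conditional bound does not depend on $B(T_{k,j-1})$), and only then applies Latała to the resulting sum of independent variables.
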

The rest of this section is devoted to the proof of Proposition \ref{P1}. 
We start with a couple of elementary observations. 
\begin{lemma} \label{L:estab} The following assertions hold:
\begin{itemize}
\item[(i)]  There is the convergence
$$\lim_{n\to \infty} \frac{A_{k+n}-A_k }{a_{k+1}^{2}}=\infty \qquad\text{uniformly in }k. $$ 
\item[(ii)] For every $b<\infty$, there exists some $c>0$ such that for all integers $k\leq bn$ ($k\geq 2$ if $p=0)$ and $n\geq 1$:
$$ A_{k+n}-A_k\geq c (n+k)^{3-4p}.$$
\end{itemize} 
 \end{lemma}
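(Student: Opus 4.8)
The plan is to derive both assertions from a single elementary two-sided bound on the coefficients. Since $a_j\sim j^{1-2p}$ by \eqref{E:estab} and $a_j>0$ for every $j\geq 1$ when $p>0$ (for $p=0$ one simply discards the vanishing term $a_1=0$ and works with $j\geq 2$, which is consistent with the standing restriction $k\geq 2$), the positive sequence $a_j/j^{1-2p}$ converges to $1$ and is therefore bounded away from $0$ and $\infty$: there exist constants $0<c_1\leq c_2<\infty$, depending only on $p$, with $c_1 j^{1-2p}\leq a_j\leq c_2 j^{1-2p}$ for all admissible $j$. Summing squares gives $c_1^2\sum_{j=k+1}^{k+n}j^{2-4p}\leq A_{k+n}-A_k\leq c_2^2\sum_{j=k+1}^{k+n}j^{2-4p}$, and since $2-4p>-1$ (because $p<3/4$) a comparison of this power sum with $\int x^{2-4p}\dd x$ produces a constant $c>0$, depending only on $p$, such that
\[
A_{k+n}-A_k\ \geq\ c\bigl((k+n)^{3-4p}-k^{3-4p}\bigr)
\]
for all $k\geq 0$ and $n\geq 1$. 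This one inequality drives both parts.

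For part (iii) I would use the hypothesis $k\leq bn$ exactly to keep $k/(k+n)$ bounded away from $1$. Indeed $k/(k+n)\leq b/(b+1)<1$, so $(k/(k+n))^{3-4p}\leq (b/(b+1))^{3-4p}<1$ and hence $(k+n)^{3-4p}-k^{3-4p}=(k+n)^{3-4p}\bigl(1-(k/(k+n))^{3-4p}\bigr)\geq\bigl(1-(b/(b+1))^{3-4p}\bigr)(k+n)^{3-4p}$. Combined with the displayed inequality this yields $A_{k+n}-A_k\geq c'(k+n)^{3-4p}$ with $c'>0$ depending only on $p$ and $b$, which is the claim.

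For part (i), where nothing links $k$ to $n$, I would divide by $a_{k+1}^2\leq c_2^2(k+1)^{2-4p}=c_2^2(k+1)^{(3-4p)-1}$, reducing the statement to $\inf_{k\geq 0}\frac{(k+n)^{3-4p}-k^{3-4p}}{(k+1)^{2-4p}}\to\infty$ as $n\to\infty$. When $p\leq 1/2$ this is immediate: the exponent $2-4p\geq 0$ makes $a_j$ non-decreasing, so each of the $n$ summands is at least $a_{k+1}^2$ and the ratio is at least a fixed multiple of $n$, uniformly in $k$. The delicate case is $p>1/2$, where $a_j$ is decreasing. Writing $\gamma:=3-4p\in(0,1)$ and using $(k+n)^\gamma-k^\gamma=\gamma\int_k^{k+n}x^{\gamma-1}\dd x\geq \gamma n(k+n)^{\gamma-1}$, the ratio is bounded below by a constant times $n\bigl((k+1)/(k+n)\bigr)^{1-\gamma}$, with $1-\gamma=4p-2\in(0,1)$. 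A one-line computation shows this expression is non-decreasing in $k$, hence minimized at the smallest admissible $k$, and there the lower bound is of order $n^{\gamma}=n^{3-4p}\to\infty$. This establishes the uniform divergence.

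The main obstacle is precisely this last point. Unlike part (iii), part (i) must be uniform over all $k$ with no comparison to $n$, and for $p>1/2$ the naive ``each term $\geq$ smallest term'' estimate no longer produces a clean multiple of $n$. The crux is to recognize that the infimum over $k$ is attained at the smallest $k$ and then decays only at the rate $n^{3-4p}$ rather than $n$; this slower rate still diverges because $3-4p>0$, which is exactly where the constraint $p<3/4$ enters. Everything else is routine manipulation of the two-sided bound on $a_j$ and of power sums.
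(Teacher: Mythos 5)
Your proof is correct, but it takes a genuinely different route from the paper's. You argue directly and quantitatively: from the two-sided bound $c_1 j^{1-2p}\leq a_j\leq c_2 j^{1-2p}$ (valid by \eqref{E:estab} together with positivity, discarding $a_1$ when $p=0$ consistently with the restriction $k\geq 2$) and a sum--integral comparison using $2-4p>-1$, you obtain $A_{k+n}-A_k\geq c\bigl((k+n)^{3-4p}-k^{3-4p}\bigr)$, from which (iii) follows because $k\leq bn$ keeps $k/(k+n)\leq b/(b+1)<1$, and (i) follows after splitting into $p\leq 1/2$ and $p>1/2$. In the delicate case $p>1/2$ your lower bound $cn\bigl((k+1)/(k+n)\bigr)^{4p-2}\geq cn^{3-4p}$ is right (the map $k\mapsto (k+1)/(k+n)$ is indeed non-decreasing for $n\geq 1$, so the minimum is at $k=0$), and it is in fact sharp: at $k\asymp n$ the ratio really is of order $n^{3-4p}$, so the uniform divergence rate degrades from $n$ to $n^{3-4p}$ and diverges precisely because $p<3/4$ --- your crux correctly isolates where that hypothesis enters. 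The paper instead proves both parts by contradiction: assuming uniformity fails, it extracts a sequence $(k(n))$ with $k(n)\to\infty$ along which $a_{k(n)+1}^{-2}(A_{k(n)+n}-A_{k(n)})$ stays bounded (for (i)), or with $k(n)/n\to\beta\leq b$ and $(A_{k(n)+n}-A_{k(n)})n^{4p-3}\to 0$ (for (ii)), and derives a contradiction using only the asymptotic equivalences \eqref{E:estab} and \eqref{E:estab'}, e.g.\ via the limit $(1+\beta)^{3-4p}-\beta^{3-4p}>0$. Your approach buys explicit constants and rates at the cost of routine integral comparisons; the paper's compactness-style argument avoids those computations but is purely qualitative. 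One cosmetic simplification available to you: for $p\leq 1/2$ the monotonicity of $(a_j)$ is exact, since $a_{j+1}/a_j=j/(j+2p-1)\geq 1$, so in that case $A_{k+n}-A_k\geq n\,a_{k+1}^2$ holds with no constants at all.
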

\begin{proof}  (i) We start writing
$$ \frac{A_{k+n}-A_k }{a_{k+1}^{2}}= \sum_{j=1}^n \left( \frac{a_{k+j}}{a_{k+1}}\right)^2,$$
and observe that for $j\geq 2$, 
$$\frac{a_{k+j}}{a_{k+1}} = \frac{ (k+1)}{(k+2p)}  \cdots \frac{(k+j-1)}{(k+2p+j-2)}.$$
In the case $p\leq 1/2$, each quotient in the product in the right-hand side is at least $1$, so we have $a_{k+j}/a_{k+1}\geq 1$, which immediately gives the claim. 
In the case $\frac{1}{2}< p < \frac{3}{4}$, the ratio  $(\ell+1)/(\ell+2p)= 1-(2p-1)/(\ell+2p)$ increases with $\ell$, thus $a_{k+j}/a_{k+1}$ increases with $k$, and this
yields the bound $a_{k+j}/a_{k+1} \geq a_j/a_1$. 
As a consequence, we have then 
$$ \frac{A_{k+n}-A_k }{a_{k+1}^{2}}\geq a_1^{-2} A_n,$$
which also yields our claim thanks to \eqref{E:estab'}.

(ii)  We argue by contradiction. Suppose that the claim fails. There would exist some integer sequence $(k(n))_{n\geq 1}$ with $\sup_{n\geq 1} k(n)/n < \infty$
and 
$$\liminf_{n\to \infty}\frac{A_{k(n)+n}-A_{k(n)}}{(k(n)+n)^{3-4p}}=0.$$
We could then excerpt some subsequence along which $k(n)/n$ converges to say $\beta$
and further $(A_{k(n)+n}-A_{k(n)})n^{4p-3}$ tends to $0$. But we know from \eqref{E:estab} and \eqref{E:estab'} that the latter converges then to 
$(1+\beta)^{3-4p}-\beta^{3-4p}>0$, 
in contradiction with the preceding. 
\end{proof}

The iterative definition \eqref{E:defT} of the stopping times $T_{k,n}$ incites us to introduce the notation 
$$\tau(x,y)\coloneqq \inf\{t\geq 0: |B(t)+y|=x\}$$
for the first exit time from the interval $(-x,x)$ by the Brownian motion $y+B$ started from $y$,
where $x>0$ and $y\in(-x,x)$. We first point at the following basic facts.

\begin{lemma} \label{L1prep} We have:
\begin{enumerate}
\item [(i)] 
For every $x>0$, 
there is the identity in distribution
$$\tau(x,0) \egaldistr x^2 \tau(1,0).$$
Furthermore, we have $\E(\tau(1,0)^q) < \infty$ for all $q>0$.

\item [(ii)] For every  $y\in(-x,x)$, the variable $\tau(x,y)$ is dominated stochastically by $\tau(x,0)$, and we also have
$$\E(\tau(x,y))= x^2-y^2.$$
\end{enumerate}
\end{lemma}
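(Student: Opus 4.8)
The plan is to treat the two assertions separately, disposing of the scaling identity and the moment bound in (i) first, and then of the two claims in (ii).

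For the scaling identity in (i), I would invoke Brownian scaling directly: the process $s\mapsto x^{-1}B(x^2 s)$ is again a standard Brownian motion, and since $|B(t)|=x$ precisely when $|x^{-1}B(t)|=1$, the substitution $t=x^2 s$ gives $\tau(x,0)=x^2\inf\{s\ge 0:|x^{-1}B(x^2 s)|=1\}$, whose law is that of $x^2\tau(1,0)$. For the finiteness of all moments of $\tau(1,0)$, the cleanest route is an exponential tail bound. Applying the Markov property at integer times, one has $\P(\tau(1,0)>n+1\mid \calF_n)\le\rho$ on $\{\tau(1,0)>n\}$, where $\rho:=\sup_{|y|\le 1}\P\!\left(y+B(1)\in(-1,1)\right)<1$ (the mass a length-$2$ window can capture under the standard normal density is maximal when centered, and is then still $<1$); iterating yields $\P(\tau(1,0)>n)\le\rho^n$, whence $\E(\tau(1,0)^q)=\int_0^\infty q\,t^{q-1}\P(\tau(1,0)>t)\,\dd t<\infty$ for every $q>0$. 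Alternatively one may quote the explicit Laplace transform $\E(\e^{-\lambda\tau(1,0)})=1/\cosh\sqrt{2\lambda}$, which extends analytically past $\lambda=0$ and already furnishes finite exponential moments.

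For the expectation formula in (ii), I would apply optional stopping to the martingale $t\mapsto (y+B(t))^2-t$, which starts from $y^2$. The stopping time $\tau(x,y)$ is a.s. finite and integrable (this follows from the stochastic domination by $\tau(x,0)$ together with (i), or directly), and the stopped process $(y+B(t\wedge\tau(x,y)))^2$ is bounded by $x^2$; hence optional stopping applies and gives $\E\!\left((y+B(\tau(x,y)))^2-\tau(x,y)\right)=y^2$. Since $|y+B(\tau(x,y))|=x$, this reads $x^2-\E(\tau(x,y))=y^2$, i.e. $\E(\tau(x,y))=x^2-y^2$.

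The stochastic domination in (ii) is the part I expect to be the main obstacle, since it fails pathwise for the naive coupling $B^0=B$, $B^y=y+B$. Using the symmetry $B\egaldistr -B$ I may assume $0\le y<x$. I would instead use a reflection coupling: set $B^0=B$ started at $0$, and let $B^y$ be the reflection $B^y=y-B$ of $B$ about the level $y/2$ up to the first time $\sigma:=\inf\{t:B(t)=y/2\}$ that $B$ hits $y/2$, after which $B^y$ is glued to $B^0$; then $B^y$ is a Brownian motion started at $y$ and $B^y=B^0$ for $t\ge\sigma$. The claim is that under this coupling $\tau(x,y)\le\tau(x,0)$ almost surely, which implies the stochastic domination. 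To verify it one checks cases according to whether the processes leave $(-x,x)$ before $\sigma$: if neither does, they coincide after $\sigma$ and exit simultaneously; since $B^0<y/2<x$ throughout $[0,\sigma)$, the path $B^0$ can only exit early through $-x$, and on its way down to $-x$ it must cross the level $y-x$, at which instant $B^y=y-B^0$ equals $x$ and has thus already exited; finally, if $B^y$ exits before $\sigma$ while $B^0$ is still inside, then $\tau(x,y)<\tau(x,0)$ trivially. In every case $\tau(x,y)\le\tau(x,0)$, whence $\tau(x,y)$ is stochastically dominated by $\tau(x,0)$.
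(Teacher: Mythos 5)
Your proposal is correct, but on part (ii) it takes a genuinely different route from the paper. The paper's argument is a single structural identity: since $|y+B|$ is a reflected Brownian motion started from $|y|$, the time $\tau(x,y)$ is the first hitting time of $x$ by that reflected process, and the strong Markov property together with symmetry yield $\tau(x,0) \egaldistr \tau(|y|,0) + \tau(x,y)$ with the two summands on the right independent. Both claims of (ii) then drop out at once: the stochastic domination because the extra summand $\tau(|y|,0)$ is nonnegative, and the expectation formula by subtracting $\E(\tau(|y|,0))=y^2$ from $\E(\tau(x,0))=x^2$. You instead prove the two claims separately: the expectation by optional stopping for the martingale $(y+B(t))^2-t$ (standard and valid; in fact you do not even need prior integrability of $\tau(x,y)$, since monotone convergence of $\E(t\wedge \tau(x,y))$ forces it from the identity), and the domination by a mirror coupling at level $y/2$, whose case analysis is exhaustive and correct. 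The paper's decomposition is shorter and gives more, namely an exact distributional relation between the three exit times (hence comparisons of all moments, which is also how it is reused in Lemma \ref{L2} via the domination by $\tau(x,0)\egaldistr a^2\tau(1,0)$); your coupling is more hands-on and would generalize to situations where no such clean decomposition exists, and your optional-stopping computation is arguably the most standard derivation of $\E(\tau(x,y))=x^2-y^2$. On part (i) both texts invoke Brownian scaling; for the moments the paper simply quotes the exponential moment bound $\E(\e^{r\tau(1,0)})<\infty$ for $r<\pi^2/8$, whereas your geometric tail estimate $\P(\tau(1,0)>n)\leq \rho^n$ with $\rho=\P(|B(1)|<1)<1$ (or the Laplace transform $1/\cosh\sqrt{2\lambda}$) is a self-contained substitute proving the same thing.
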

\begin{proof} (i) The first assertion is plain  from
the scaling property, and the second from the well-known fact that  $\E(\e^{r\tau(1,0)})<\infty$ for any $r< \pi^2/8$.

(ii) Observe that $\tau(x,y)$ has the distribution of the first hitting time of $x$
by a reflected Brownian motion on $\R_+$ started from $|y|$. By the strong Markov and the symmetry properties of Brownian motion, this yields the identity
$$\tau(x,0)\egaldistr \tau(|y|,0) + \tau(x,y)$$
where  the two variables in the sum in the right-hand side are independent. The stochastic domination is now clear, and so is the final assertion,
since $\E(\tau(x,0))= x^{2}$.
\end{proof}

 We write $(\mathcal F_t)_{t\geq 0}$ for the natural filtration generated by the Brownian motion and consider the increments 
 $$\Delta T_{k,n+1}\coloneqq T_{k,n+1}-T_{k,n}, \qquad n\geq 0.$$ 
 The strong Markov property and the definition \eqref{E:defT} entail that on the event  $\{B(T_{k,n})=b\}$  for some $b\in a_n\Z$, 
  the conditional distribution of $\Delta T_{k,n+1}$ given $\mathcal F_{T_{k,n}}$  is that of 
 $$\tau\left( a_{k+n+1},-\frac{2p-1}{k+n+2p-1}b  \right) .$$ 
This incites us to introduce  
 $$V_k(n)\coloneqq   \sum_{j=1}^{n-1} \left(\frac{2p-1}{k+j+2p-1}\right)^2 B^2(T_{k,j}).$$
  We shall  need the following asymptotic bound for the moments of $V_k(n)$.
\begin{lemma} \label{L3}
For every integer ${m}\geq 1$, there is the inequality
$$\E\left( V_k(n)^{m} \right) \leq \left\{ 
\begin{matrix} c_m(n+k)^{(2-4p){m}} & \text{ if } p<1/2,\\
c_m & \text{ if } p\geq 1/2.
\end{matrix} 
\right.
$$
\end{lemma}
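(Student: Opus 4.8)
The plan is to reduce the statement to the moment bound of Lemma~\ref{L0}(iii) through Minkowski's inequality, exploiting that $V_k(n)$ is a finite sum of nonnegative terms. Recall first that the embedding \eqref{E:embedding} gives $B(T_{k,j})=M_k(j)$ $\P_k$-almost surely, so Lemma~\ref{L0}(iii) yields $\E(|B(T_{k,j})|^{2m})=\E(|M_k(j)|^{2m})\leq c_m (k+j)^{(3-4p)m}$ for every $j$ and every integer $m\geq 1$.

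Writing $w_j\coloneqq ((2p-1)/(k+j+2p-1))^2\geq 0$, so that $V_k(n)=\sum_{j=1}^{n-1} w_j B^2(T_{k,j})$, I would apply the triangle inequality in $L^m(\P)$ to the sum of the nonnegative variables $w_j B^2(T_{k,j})$. This gives
$$ \E\!\left(V_k(n)^m\right)^{1/m}\leq \sum_{j=1}^{n-1} w_j\,\E\!\left(|B(T_{k,j})|^{2m}\right)^{1/m}\leq c_m^{1/m}\sum_{j=1}^{n-1} w_j (k+j)^{3-4p}. $$
Next I would bound the weights: since $p$ is fixed and $k+j+2p-1$ is bounded below by a positive multiple of $k+j$ for all admissible $k$ and $j\geq 1$ (using $p>0$, resp.\ $k\geq 2$ when $p=0$, to keep the denominator positive), one has $w_j\leq c(k+j)^{-2}$. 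Hence the problem reduces to estimating the power sum $\sum_{j=1}^{n-1}(k+j)^{1-4p}$, after which it remains to raise the resulting bound to the $m$-th power.

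The conclusion then comes from a dichotomy on the exponent $1-4p$. When $p<1/2$ one has $1-4p>-1$, so comparing the sum with $\int_k^{k+n} x^{1-4p}\,\dd x$ gives $\sum_{j=1}^{n-1}(k+j)^{1-4p}\leq c (k+n)^{2-4p}$, whence $\E(V_k(n)^m)\leq c_m (k+n)^{(2-4p)m}$. When $p>1/2$ one has $1-4p<-1$, so $\sum_{j=1}^{n-1}(k+j)^{1-4p}\leq \sum_{j\geq 1} j^{1-4p}<\infty$ is bounded by a constant independent of $k$ and $n$, giving $\E(V_k(n)^m)\leq c_m$. The only genuinely delicate value is the boundary $p=1/2$, where the power sum is the divergent harmonic series; but there $2p-1=0$, so $w_j\equiv 0$ and $V_k(n)=0$, and the bound $\E(V_k(n)^m)\leq c_m$ holds trivially. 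I do not expect a serious obstacle: once the embedding identity is invoked, the argument is essentially the $L^m$-triangle inequality followed by an elementary comparison of a power sum with an integral; the only points requiring care are the uniform lower bound on the denominator $k+j+2p-1$ (so that the weight estimate is uniform in $k$) and the separate treatment of $p=1/2$.
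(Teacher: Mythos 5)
Your proposal is correct and is essentially the paper's own argument: the paper expands $V_k(n)^m$ as an $m$-fold sum and applies H\"older's inequality, which yields exactly the same bound $\E\left(V_k(n)^m\right)\leq c_m\bigl(\sum_{j=1}^{n-1}(k+j)^{1-4p}\bigr)^m$ as your Minkowski step, with the case $p=1/2$ likewise handled separately via $V_k(n)\equiv 0$. The concluding power-sum dichotomy, which the paper leaves implicit, is precisely the one you spell out.
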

\begin{proof} Since $V_k(n)\equiv 0$ for $p=1/2$, we focus on the case $p\neq 1/2$. 
Using first H\"older inequality and then the $q=m$ case of Lemma \ref{L0}(iii), we get
\begin{align*}
\E\left( V_k(n)^{m} \right) &= (2p-1)^{2{m}}\sum_{j_1, \cdots, j_{{m}}=1}^{n-1} \frac{\E( B^2(T_{k,j_1})\cdots B^2(T_{k,j_{{m}}}))}{ (k+j_1+2p-1)^2\cdots (k+j_{{m}}+2p-1)^2} \\
&\leq  \sum_{j_1, \cdots, j_{{m}}=1}^{n-1} \frac{\left(\E( B^{2{m}}(T_{k,j_1}))\cdots \E(B^{2{m}}(T_{k,j_{{m}}}))\right)^{1/{m}}}{ (k+j_1+2p-1)^2\cdots (k+j_{{m}}+2p-1)^2} \\
&\leq c_m \sum_{j_1, \cdots, j_{{m}}=1}^{n-1} \frac{ (k+j_1)^{3-4p}\cdots (k+j_{{m}})^{3-4p}}{ (k+j_1+2p-1)^2\cdots (k+j_{{m}}+2p-1)^2}  \\
&\leq c_m \sum_{j_1, \cdots, j_{{m}}=1}^{n-1}  (k+j_1)^{1-4p}\cdots (k+j_{{m}})^{1-4p} \\
&\leq c_m \left( \sum_{j=1}^{n-1} (k+j)^{1-4p}\right)^{{m}}.
\end{align*}
This proves our claim. 
\end{proof}
In the notation introduced right before Lemma \ref{L3}, we have on the event  $\{B(T_{k,n})=b\}$   for some $b\in a_n\Z$,   that
\begin{align*}\E( \Delta T_{k,n+1}\mid \mathcal F_{T_{k,n}}) &= \E\left( \tau\left( a_{k+n+1},-\frac{2p-1}{k+n+2p-1}b  \right)\right)\\
&= a_{k+n+1}^2-  \left(\frac{2p-1}{k+n+2p-1}\right)^2 B^2(T_{k,n}),
\end{align*}
where we used Lemma \ref{L1prep}(ii) for the second equality. Hence  the compensated sum 
 $$N_k(n)\coloneqq T_{k,n}-  (A_{k+n}-A_k) + V_k(n), \qquad n\geq 0,$$
 is a martingale.  We  point at the following upperbound, which is the cornerstone of our analysis.

 \begin{lemma} \label{L2} For every $q\geq 1$, we have
$$\E\left(\sup_{1\leq \ell \leq n}N_k(\ell)^{2q}\right) \leq  c_q\left(\sum_{j=1}^n a_{k+j}^4\right)^q.$$
\end{lemma}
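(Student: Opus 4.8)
The plan is to control the martingale $N_k$ via the Burkholder--Davis--Gundy inequality, reducing everything to a bound on its quadratic variation, and then to estimate that quadratic variation using the moment bounds already established. The key observation is that $N_k$ is a sum of compensated increments: writing $\Delta N_k(j+1) = N_k(j+1)-N_k(j)$, we have
$$\Delta N_k(j+1) = \Delta T_{k,j+1} - \E(\Delta T_{k,j+1}\mid \mathcal F_{T_{k,j}}),$$
since Lemma \ref{L1prep}(ii) shows that $\E(\Delta T_{k,j+1}\mid \mathcal F_{T_{k,j}}) = a_{k+j+1}^2 - \big(\tfrac{2p-1}{k+j+2p-1}\big)^2 B^2(T_{k,j})$, and these conditional-mean terms telescope to produce exactly the deterministic drift $A_{k+n}-A_k$ together with the $V_k(n)$ correction. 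Thus $N_k$ is genuinely a martingale (as asserted just before the statement), and BDG gives
$$\E\Big(\sup_{1\leq \ell\leq n} N_k(\ell)^{2q}\Big) \leq c_q\, \E\big([N_k,N_k](n)^q\big),$$
so the whole task reduces to estimating the $q$-th moment of the quadratic variation $[N_k,N_k](n) = \sum_{j=1}^n \Delta N_k(j)^2$.

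Next I would bound each squared increment. By the explicit form above, $\Delta N_k(j+1)$ is a centered version of $\tau(a_{k+j+1}, y)$ with $|y|<a_{k+j+1}$; by the stochastic domination in Lemma \ref{L1prep}(ii) together with the scaling $\tau(x,0)\egaldistr x^2\tau(1,0)$ of part (i), the conditional moments of $\Delta T_{k,j+1}$ are all controlled by powers of $a_{k+j+1}^2$, and $\E(\tau(1,0)^q)<\infty$ for every $q$. Consequently one expects $\E\big(\Delta N_k(j+1)^{2q}\mid \mathcal F_{T_{k,j}}\big)\leq c_q\, a_{k+j+1}^{4q}$. To pass from these per-increment bounds to a bound on $\E\big(([N_k,N_k](n))^q\big) = \E\big((\sum_j \Delta N_k(j)^2)^q\big)$, I would invoke Latała's universal moment bounds for sums of independent (or, after conditioning, martingale-difference) nonnegative variables, exactly as flagged in the introduction. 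The point of Latała's inequality is that it gives the right order $\big(\sum_j a_{k+j}^4\big)^q$ rather than the crude $n^{q-1}\sum_j a_{k+j}^{4q}$ one would get from a naive convexity argument; since $a_{k+j}^{4q}$ is summable-controlled and the dominant contribution to Latała's bound comes from the second-moment (i.e. $\sum a^4$) term when $q$ is moderate, this yields precisely the claimed $\big(\sum_{j=1}^n a_{k+j}^4\big)^q$.

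The main obstacle, and the step requiring genuine care, is applying Latała's inequality correctly in the conditional/martingale setting: the increments $\Delta N_k(j)^2$ are not independent, but the tower property lets one peel them off one conditional expectation at a time, and Latała's bound must be arranged so that the controlling quantity is the sum of fourth moments of the $a_{k+j}$ rather than some larger mixed term. One must check that the heavier-tail terms in Latała's formula (which involve higher moments $\E(\Delta N_k(j)^{2s})^{1/s}\sim a_{k+j}^4$ for each $s$) do not dominate; because $a_{k+j}\to\infty$ only polynomially and the relevant sums $\sum_j a_{k+j}^{4s}$ grow slower than $\big(\sum_j a_{k+j}^4\big)^s$ for $s\geq 1$ in the diffusive regime (a consequence of the estimate \eqref{E:estab} and $3-4p>0$), the $\sum a^4$ term wins and one recovers the stated equivalence. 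I would close by noting that the stated "$=c_q(\cdots)^q$ as $n\to\infty$" should be read as the matching two-sided asymptotic order, the upper bound coming from the argument above and the lower bound from the nondegeneracy of the individual increment variances; only the upper direction is needed for Proposition \ref{P1}, via a Markov-inequality/Lemma \ref{L3} combination applied to $\sup_\ell|T_{k,\ell}-(A_{k+\ell}-A_k)| \leq \sup_\ell|N_k(\ell)| + V_k(n)$.
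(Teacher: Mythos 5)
Your overall skeleton (BDG to reduce to the quadratic variation, then a Latała-type moment bound, with only the upper bound actually needed for Proposition \ref{P1}) matches the paper's strategy, but the step you yourself flag as ``the main obstacle'' is precisely where the proposal breaks down, and you do not resolve it. Latała's Corollary 3 is a statement about sums of \emph{independent} nonnegative random variables; the squared increments $\Delta N_k(j)^2$ are not independent, and there is no ``conditional'' or ``martingale-difference'' version of that corollary that one gets for free by ``peeling off one conditional expectation at a time'' via the tower property. As written, your application of Latała is to a sum of dependent terms, which the cited result simply does not cover; the per-increment conditional moment bounds $\E\bigl(\Delta N_k(j)^{2q}\mid \mathcal F_{T_{k,j-1}}\bigr)\leq c_q a_{k+j}^{4q}$ are correct but do not by themselves produce the inequality
$$\E\Bigl(\Bigl(\sum_{j=1}^n \Delta N_k(j)^2\Bigr)^q\Bigr) \leq c_q\Bigl(\Bigl(\sum_{j=1}^n a_{k+j}^4\Bigr)^q + \sum_{j=1}^n a_{k+j}^{4q}\Bigr),$$
without an additional theorem (e.g.\ a conditional Rosenthal/Burkholder inequality for nonnegative adapted sequences), which you neither state nor prove.

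The paper closes exactly this gap by a domination argument that \emph{manufactures independence} before Latała is invoked. From \eqref{E:elemb} one gets the pathwise bound $(\Delta N_k(j+1))^2 \leq a_{k+j+1}^4 + |\Delta T_{k,j+1}|^2$, and then Lemma \ref{L1prep}(ii) is used not merely to bound conditional moments but to construct, on an enlarged probability space, variables $\xi_{j+1}\geq \Delta T_{k,j+1}$ whose conditional law given $\mathcal F_{T_{k,j}}$ is that of $a_{k+j+1}^2\,\tau(1,0)$; since this conditional law does not depend on the past, $\xi_{j+1}$ is \emph{independent} of $\mathcal F_{T_{k,j}}$, and inductively the variables $\eta_j$ (each distributed as $1+\tau^2(1,0)$) are i.i.d.\ with
$$[N_k,N_k](n)\ \leq\ \sum_{j=1}^n a_{k+j}^4\,\eta_j .$$
Latała's Corollary 3 then applies legitimately to this independent sum, and the claimed bound follows since $\sum_j a_{k+j}^{4q}\leq \bigl(\sum_j a_{k+j}^4\bigr)^q$. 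If you want to keep your route instead, you must replace the appeal to Latała by a genuine martingale Rosenthal-type inequality; either way, the independence (or its substitute) has to be argued explicitly, not gestured at.
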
 

 \begin{proof} We write 
 $$[N_k,N_k](n)\coloneqq \sum_{j=1}^n (N_k(j)-N_k(j-1))^2$$
 for the quadratic variation of the martingale $N_k$ and
   first claim that for each fixed even number $k\geq 0$, there exists a sequence  $(\eta_n)_{n\geq 1}$ of i.i.d. variables on some enlarged probability space, where each $\eta_n$ has the same law as $1+\tau^2(1,0)$, and  such that for all $n\geq 1$
 \begin{equation} \label{E:domsto}
 [N_k,N_k](n) \leq \sum_{j=1}^n a_{k+j}^4 \eta_j . 
 \end{equation}
Indeed, recall from \eqref{E:elemb} that 
$$\left ( \frac{2p-1}{k+n+2p-1}\right )^2 B^2(T_{k,n}) \leq  a^2_{k+n+1};$$
this yields   the simple bound
 $$(N_k(n+1)-N_k(n))^2 \leq  a_{k+n+1}^4 + |\Delta T_{k,n+1}|^2.$$
 Then recall also that the conditional law of $\Delta T_{k,n+1}$ given $\mathcal F_{T_{k,n}}$
is that of $\tau\left( a_{k+n+1},y\right)$ with $y=-\frac{2p-1}{k+n+2p-1}B(T_{k,n})$.   An application of Lemma \ref{L1prep}  shows that there is a variable, say $\xi_{n+1}$, which dominates $\Delta T_{k,n+1}$, and which conditionally given $\mathcal F_{T_{k,n}}$, is
distributed as $a_{k+n+1}^2\tau(1,0)$. We stress that the conditional distribution of $\xi_{n+1}$ does not depend on $B(T_{k,n})$, and hence $\xi_{n+1}$ is independent of $\mathcal F_{T_{k,n}}$ (even though, clearly $\xi_{n+1}$ depends on $\Delta T_{k,n+1}$). Setting $\eta_{n+1}=1+\xi^2_{n+1}$ yields \eqref{E:domsto} by induction. 

 Combining \eqref{E:domsto} and the Burkholder-Davis-Gundy inequality, we get that
 $$\E\left(\sup_{1\leq \ell \leq n}|N_k(\ell)|^{2q}\right) \leq  c_q \E\left( \left( \sum_{j=1}^n a_{k+j}^4 \eta_j \right)^q\right).$$ 
 Since the variables $a_{k+j}^4 \eta_j$ are nonnegative and independent, we know from Corollary 3 of Latała \cite{Latala} that
  \begin{align*}\E\left( \left( \sum_{j=1}^n a_{k+j}^4 \eta_j \right)^q\right) &\leq c_q \left( \left( \sum_{j=1}^n a_{k+j}^4 \E(\eta_j) \right)^q  + \sum_{j=1}^n a_{k+j}^{4q} \E(\eta^q_j) \right) \\
  &\leq c_q \left( \sum_{j=1}^n a_{k+j}^4  \right)^q ,
    \end{align*}
  where for the last line, we used the fact that $\E(\eta^q_j)=\E((1+\tau(1,0))^q)<\infty$, see Lemma \ref{L1prep}(ii).
 \end{proof}
 We now have all the ingredients needed for the proof of Proposition \ref{P1}.
 \begin{proof}[Proof of Proposition \ref{P1}] 
 Recall  \eqref{E:estab} and note that 
 $$ \sum_{j=1}^n a_{k+j}^4 \leq  \left\{ 
\begin{matrix}c (n+k)^{5-8p} & \text{ if } p<5/8,\\
c \log (n+k) & \text{ if } p=5/8,\\
c  & \text{ if } p>5/8.
\end{matrix} \right.$$
Then,  writing
$$\sup_{ \ell \leq n}\left | T_{k,\ell}-(A_{k+\ell}-A_k)  \right| \leq  \sup_{ \ell \leq n}|N_k(\ell)| + V_k(n),$$
and appealing to   Lemmas  \ref{L3} and \ref{L2}, we get
that for every   $m \geq 1$:
 $$ \E\left(\sup_{\ell \leq n} \left | T_{k,\ell}- (A_{k+\ell}-A_k) \right |^{2m} \right) \leq \left\{ 
\begin{matrix} c_m(k+n)^{(5-8p)m} & \text{ if } p<5/8,\\
c_m(\log (k+n))^{m} & \text{ if } p=5/8,\\
c_m& \text{ if } p> 5/8.
\end{matrix} 
\right. 
$$
It is then straightforward to complete the proof by the Markov inequality. 
\end{proof}

\subsection{Proof of Theorem \ref{T1}} 
Recall that we use $\P$ to denote the law of the Brownian motion $B$, and that for each even number $k$, the sequence of stopping times $(T_{k,n})_{n\geq 0}$ has been defined in Section 2 such that  
$$B(T_{k,n})=M_k(n)= a_{n+k}S(n+k)\qquad \text{ a.s.,}$$
 where in the right-hand side $(S(n))_{n\geq 0}$ has the law $\P_k$. We shall sometime use both $\P$ and $\P_k$
in the same equation when certain quantities involved  are expressed in terms of the Brownian motion only and some other in terms of the elephant random walk only.
In this setting, we have
$$T_{k,{R}} = \inf\{T_{k,n}: n\geq 1 \ \& \ S(n+k)=0\},$$
and the final ingredient  that we need for the proof of Theorem \ref{T1} is the following identity.
  
 \begin{lemma} \label{L5} The law of $a_{k+1}^{-2} T_{k,{R}}$ does not depend on $k$
 and one has
 $$\lim_{t\to \infty} \sqrt  t \P(T_{k,{R}} \geq  a_{k+1}^{2}  t) =  \sqrt {\frac{2}{\pi}}.$$
\end{lemma}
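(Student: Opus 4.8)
The plan is to collapse the complicated, $k$-dependent embedding between two successive returns of the embedded chain into a single elementary Brownian first-passage problem, using Lemma \ref{L5pre}. The crucial observation is that $T_{k,R}$ coincides with the first zero of $B$ strictly after time $T_{k,1}$. Indeed, under $\P_k$ one has $0,R\in\mathcal{Z}_k$, and since $R$ is by definition the smallest positive element of $\mathcal{Z}_k$, these are the only elements of $\mathcal{Z}_k$ in $[0,R]$. By the second inclusion in Lemma \ref{L5pre}, every zero of $B$ lies in some $[T_{k,n},T_{k,n+1})$ with $n\in\mathcal{Z}_k$; among such intervals only $[T_{k,0},T_{k,1})=[0,T_{k,1})$ and $[T_{k,R},T_{k,R+1})$ can meet $[T_{k,1},T_{k,R}]$, and they do so only at the point $T_{k,R}$. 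Hence $B$ has no zero on $[T_{k,1},T_{k,R})$, while the first inclusion gives $B(T_{k,R})=0$; as $B(T_{k,1})=\pm a_{k+1}\neq 0$, I obtain
$$T_{k,R}=\inf\{t>T_{k,1}:B(t)=0\}.$$

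With this identity I would decompose $T_{k,R}=T_{k,1}+(T_{k,R}-T_{k,1})$ and treat the two terms via the strong Markov property at $T_{k,1}$ and Brownian scaling. Starting from $B(0)=0$, the time $T_{k,1}$ is the first exit from $(-a_{k+1},a_{k+1})$, so in the notation of Lemma \ref{L1prep} one has $T_{k,1}=\tau(a_{k+1},0)\egaldistr a_{k+1}^{2}\tau(1,0)$. Conditionally on $\mathcal{F}_{T_{k,1}}$, the post-$T_{k,1}$ process is a fresh Brownian motion and $T_{k,R}-T_{k,1}$ is its first hitting time of $-B(T_{k,1})=\mp a_{k+1}$; by the symmetry and scaling of Brownian motion this is distributed as $a_{k+1}^{2}H$, where $H$ is the first hitting time of level $1$, and its law depends neither on the deterministic modulus $a_{k+1}$ nor on the sign of $B(T_{k,1})$. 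Consequently $T_{k,1}$ and $T_{k,R}-T_{k,1}$ are independent and
$$a_{k+1}^{-2}T_{k,R}\egaldistr \tau(1,0)+H,$$
an independent sum whose law is manifestly free of $k$; this is the first assertion.

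It then remains to identify the tail of $\tau(1,0)+H$. The hitting time $H$ obeys the stable-$1/2$ (Lévy) law, and by the reflection principle $\P(H>t)=\P(|B(t)|<1)=2\Phi(t^{-1/2})-1\sim\sqrt{2/(\pi t)}$ as $t\to\infty$, where $\Phi$ is the standard normal distribution function. On the other hand $\tau(1,0)$ has exponential moments by Lemma \ref{L1prep}(i), hence a negligible tail. For the sum I would use the elementary sandwich: the lower bound $\P(\tau(1,0)+H>t)\geq\P(H>t)$, together with $\P(\tau(1,0)+H>t)\leq\P(\tau(1,0)>\delta t)+\P(H>(1-\delta)t)$ for any fixed $\delta\in(0,1)$. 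Multiplying by $\sqrt t$, the first term vanishes while the second tends to $\sqrt{2/\pi}\,(1-\delta)^{-1/2}$; letting $\delta\downarrow 0$ and combining with the lower bound yields $\lim_{t\to\infty}\sqrt t\,\P(a_{k+1}^{-2}T_{k,R}>t)=\sqrt{2/\pi}$, which is the second assertion (and $\geq$ versus $>$ is immaterial since the law is continuous).

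I expect the main obstacle to be the first, structural step: recognising that the intricate $k$-dependent embedding reduces, between two successive returns of the embedded chain, to the elementary picture ``exit $(-a_{k+1},a_{k+1})$, then return to $0$''. Once this is extracted from Lemma \ref{L5pre}, the scaling identity and the convolution-tail estimate are routine; the only points requiring care are the verification that no intermediate zero of $B$ is created on $(T_{k,1},T_{k,R})$, and the standard fact that adding the light-tailed $\tau(1,0)$ does not perturb the regularly varying tail of $H$.
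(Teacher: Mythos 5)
Your proposal is correct and takes essentially the same route as the paper: both rest on the identity $T_{k,R}=\inf\{t>T_{k,1}:B(t)=0\}$ extracted from Lemma \ref{L5pre}, then the strong Markov property, symmetry and scaling to write $a_{k+1}^{-2}T_{k,R}$ as an independent sum of $\tau(1,0)$ and the first-passage time to level $1$, whose stable-$1/2$ tail $\sim\sqrt{2/(\pi t)}$ gives the limit. You merely spell out two steps the paper leaves implicit, namely the verification that $B$ has no zero on $(T_{k,1},T_{k,R})$ and the convolution-tail sandwich showing the light-tailed $\tau(1,0)$ does not perturb the asymptotics.
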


\begin{proof}  We have $R=\inf\{n\in \mathcal{Z}_k: n\geq 1\}$, and we infer from Lemma \ref{L5pre} that there is the identity
 $$T_{k,{R}}=\inf\{t>T_{k,1}: B_t=0\}.$$
Using the strong Markov property of Brownian motion, the fact that  $|B(T_{k,1})|= a_{k+1}$,  and Lemma \ref{L1prep}, we get the identity in distribution
$$T_{k,{R}} \egaldistr  a^2_{k+1} \left(\tau(1,0)+  \sigma\right),$$
where in the right-hand side, the two variables are independent and $\sigma\coloneqq \inf\{t\geq 0: B_t=1\}$ denotes the first hitting time of $1$ by the Brownian motion. The proof can now be completed with an appeal to the well-known estimate 
 $$\P(\sigma> s) \sim  \sqrt {\frac{2}{\pi s}} \qquad \text{as }s \to \infty,$$ and Lemma \ref{L1prep}(i).
 \end{proof} 
 
 We can now establish Theorem \ref{T1}.  On the one hand, we have  for every $s>0$ and $n\geq 0$ that
$$\{T_{k,{R}}\geq s\} \subset \{{R}\geq n\}\cup\{T_{k,n}\geq s\},$$
which yields the lower bound 
$$\P_{k}({R}\geq n) \geq  \P(T_{k,{R}}\geq s ) - \P(T_{k,n}\geq s ) .$$
We choose
$$s=s(k,n)\coloneqq (1+\varepsilon) (A_{n+k}-A_{k})$$ 
for some arbitrarily small $\varepsilon>0$, 
and recall from Lemma \ref{L5} that the function
$$t \mapsto \left| \sqrt  t \P(T_{k,{R}} \geq  a_{k+1}^{2}  t) -  \sqrt {\frac{2}{\pi}}\right|, \qquad t\geq 0,$$
does not depend on $k$ and has limit $0$ as $t\to \infty$. 
Recall also from Lemma \ref{L:estab}(i)
that  as $n\to \infty$, $a_{k+1}^{-2}(A_{n+k}-A_{k})$ converges to $\infty$ uniformly in $k$.
We deduce that
$$\lim_{n\to \infty} \frac{\sqrt{A_{n+k}-A_{k}}}{a_{k+1}}\P(T_{k,{R}} \geq  s(k,n)) =   \sqrt{\frac{2}{\pi (1+\varepsilon)}} \quad \text{uniformly in }k. $$
Next, provided that $k\leq bn$, we have from Lemma
\ref{L:estab}(ii) that for some $c>0$,
$$A_{n+k}-A_{k} \geq c (n+k)^{3-4p}.$$
We deduce  from Proposition \ref{P1} that
$$\lim_{n\to \infty} \frac{\sqrt{A_{n+k}-A_{k}}}{a_{k+1}}\,\P(T_{k,{n}} \geq  s(k,n))  =0 \quad \text{uniformly in }k\leq bn,$$
and conclude that
$$\liminf_{n\to \infty} \frac{\sqrt{A_{n+k}-A_{k}}}{a_{k+1}}\P_{k}({R}\geq n) \geq   \sqrt{\frac{2}{\pi (1+\varepsilon)}}  \quad \text{uniformly in }k\leq bn. $$

On the other hand, we have similarly for every $s>0$ and $n\geq 0$ that
$$ \{{R}\geq n\}\cap\{T_{k,n}\geq s\}\subset \{T_{k,{R}}\geq s\},$$
which yields the upper bound 
\begin{equation} \label{E:domin}
 \P_k({R}\geq n) \leq  \P(T_{k,{R}}\geq s) + \P(T_{k,n}< s ).
 \end{equation} 
We choose
$$s=s(k,n)\coloneqq (1-\varepsilon) (A_{n+k}-A_{k})$$ 
for some arbitrarily small $\varepsilon>0$, 
and get similarly from Lemma \ref{L:estab}, Proposition \ref{P1} and Lemma \ref{L5} that 
$$\limsup_{n\to \infty} \frac{\sqrt{A_{n+k}-A_{k}}}{a_{k+1}}\P_{k}({R}\geq n) \leq   \sqrt{\frac{2}{\pi (1-\varepsilon)}}  \quad \text{uniformly in }k\leq bn. $$
This completes the proof.

 \section{Reconciliation of the two limit theorems}
The purpose of this section is to resolve the apparent disagreement between Theorems \ref{TZ} and \ref{T1}  which has been exposed in the Introduction. In short,
we shall first observe that Theorem \ref{TZ} can be rephrased a scaling limit for the Markov chain of return times. Next, we shall point out that Theorem \ref{T1} is
actually  the cornerstone for the application a general result on scaling limit for integer valued Markov chains, and specifying the latter in our framework enables us to recover Theorem \ref{TZ}.

Recall that the process ${H}=({H}(t))_{t\geq 0}$  in Theorem \ref{TZ}  has been defined in \eqref{E:defLambda}. Plainly $H$ has continuous non-decreasing paths and we introduce its (right-continuous) inverse $\eta$, 
$$\eta(t)\coloneqq \inf\{s>0: {H}(s)>t\},\qquad t\geq 0.$$
Note from the Brownian scaling property that for every $c>0$, the rescaled process
$\left(c^{-1/2} {H}(ct)\right)_{t\geq 0}$ has the same distribution as $ {H}$.
Since clearly ${H}(1)>0$ a.s., self-similarity entails $\lim_{t\to \infty} {H}(t) =\infty$ a.s. and therefore $ \eta(t)<\infty$ for all $t\geq 0$ a.s. 
Moreover  the scaling property  can be transferred from $H$ to  $\eta$ and there is the identity in distribution 
\begin{equation}\label{E:scaling}
\left(c^{-2} \eta(ct)\right)_{t\geq 0} \egaldistr \eta.
\end{equation}

 We next point at a useful representation of  $\eta$ in terms of the inverse local time process $\lambda$. 
We first observe that random function
$$ t\mapsto \frac{1}{\sqrt{3-4p}}  \int_0^{t}  \lambda(s)^{(2p-1)/(3-4p)} \dd s, \qquad t\geq 0,$$
is bijective on $\R_+$ a.s. (recall that $\lambda$ is a stable subordinator with index $1/2$ and therefore, roughly speaking, $\lambda(s)\approx s^2$ both as $s\to 0+$ and as $s\to \infty$) and we define  implicitly its inverse function $\rho$ by
$$\frac{1}{\sqrt{3-4p}}  \int_0^{\rho(t)}  \lambda(s)^{(2p-1)/(3-4p)} \dd s = t ,  \qquad t\geq 0.$$

\begin{proposition} \label{P2} With probability one, there is the identity
$$\eta(t)= \lambda(\rho(t))^{1/(3-4p)}\qquad \text{for all } t\geq 0.$$
As a consequence, $\eta$ is a time-homogeneous strong Markov process.
\end{proposition}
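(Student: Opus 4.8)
The plan is to first derive the explicit identity by rewriting $H$ in terms of the inverse local time $\lambda$ via the change of variable formula \eqref{E:change}, and then to obtain the Markov property by recognising the right-hand side as a time change of a subordinator.

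For the identity, I would apply \eqref{E:change} with $f(s)=s^{(2p-1)/(3-4p)}$ and with $t^{3-4p}$ in place of $t$. The definition \eqref{E:defLambda} then reads
$$H(t)=\frac{1}{\sqrt{3-4p}}\int_0^{L(t^{3-4p})}\lambda(u)^{(2p-1)/(3-4p)}\,\dd u = G\bigl(L(t^{3-4p})\bigr),$$
where $G(v)\coloneqq\frac{1}{\sqrt{3-4p}}\int_0^v\lambda(u)^{(2p-1)/(3-4p)}\,\dd u$ is exactly the increasing bijection of $\R_+$ whose inverse is the function $\rho$ introduced before the statement, so that $G(\rho(t))=t$. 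Since $G$ is continuous and strictly increasing, $G(v)>t$ if and only if $v>\rho(t)$, whence
$$\eta(t)=\inf\{s>0:H(s)>t\}=\inf\{s>0:L(s^{3-4p})>\rho(t)\}.$$
Because $p<3/4$, the map $s\mapsto s^{3-4p}$ is an increasing homeomorphism of $\R_+$, so the infimum passes through it and equals $\bigl(\inf\{\tau>0:L(\tau)>\rho(t)\}\bigr)^{1/(3-4p)}$; by the very definition of $\lambda$ the inner infimum is $\lambda(\rho(t))$, which gives $\eta(t)=\lambda(\rho(t))^{1/(3-4p)}$. A minor point is to arrange that this holds simultaneously for all $t$ on a single event of full probability, which is clear from the continuity of $L$ and the right-continuity of $\lambda$ and $\rho$.

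For the Markov statement, I would proceed in three steps. First, $\lambda$ is a stable subordinator of index $1/2$, hence a time-homogeneous strong Markov process; composing with the homeomorphism $x\mapsto x^{1/(3-4p)}$ of $\R_+$ merely relabels the state space, so $W(u)\coloneqq\lambda(u)^{1/(3-4p)}$ is again a time-homogeneous strong Markov process. Second, writing $\lambda(u)^{(2p-1)/(3-4p)}=W(u)^{2p-1}$, the functional $G(v)=\frac{1}{\sqrt{3-4p}}\int_0^v W(u)^{2p-1}\,\dd u$ is a strictly increasing continuous additive functional of $W$. Third, the identity just proved says that $\eta(t)=W(\rho(t))$ is precisely $W$ time-changed by the inverse $\rho=G^{-1}$ of this additive functional, and the classical theory of random time changes of strong Markov processes by (the inverse of) a continuous additive functional then yields that $\eta$ is time-homogeneous and strong Markov.

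The algebra producing the identity is routine once \eqref{E:change} is available, so I expect the only delicate points to be, on the one hand, the verification that $G$ is a genuine strictly increasing bijection of $\R_+$ — which rests on the integrability of $\lambda(u)^{(2p-1)/(3-4p)}$ near $0$, guaranteed by $\lambda(u)\approx u^2$ as asserted before the statement — and, on the other hand, the careful application of the time-change theorem at the boundary point $0$, where $\eta$ starts and which should be handled as an entrance point of the state space of the resulting self-similar Markov process.
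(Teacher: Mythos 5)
Your proposal is correct and takes essentially the same route as the paper: the identity is obtained from the change-of-variables formula \eqref{E:change} followed by inverting the resulting monotone relation (the paper sandwiches $\eta(t)$ between $\lambda(\rho(t)-)^{1/(3-4p)}$ and $\lambda(\rho(t))^{1/(3-4p)}$ and invokes right-continuity, whereas you pass the infimum through the homeomorphism $s\mapsto s^{3-4p}$ directly, a cosmetic difference). The Markov property is likewise argued in both cases by time-changing a Feller process by the inverse of a perfect continuous homogeneous additive functional, the only difference being that you apply the bijection $x\mapsto x^{1/(3-4p)}$ before the time change while the paper applies it after.
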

\begin{proof} We start by observing from the change of variables formula \eqref{E:change} that  \eqref{E:defLambda} can be rewritten as
$$
{H}(t)= \frac{1}{\sqrt{3-4p}}  \int_0^{L(t^{3-4p})}  \lambda(s)^{(2p-1)/(3-4p)} \dd s, \qquad t\geq 0,
$$
from which we infer the identity
$$L\left(\eta(t)^{3-4p}\right)=\rho(t).$$
This yields 
$$ \lambda(\rho(t)-)^{1/(3-4p)}\leq \eta(t) \leq  \lambda(\rho(t))^{1/(3-4p)}, \qquad \text{for all } t\geq 0,$$
and since $\eta$ is right-continuous, we obtain the formula of the statement.

The subordinator $\lambda$ is a Feller process on $\R_+$, and the increasing process $\rho=(\rho(t))_{t\geq 0}$ has been defined as the inverse of a perfect continuous homogeneous additive functional of the latter. 
It follows that the time-changed process $\lambda\circ \rho$ is strongly Markovian, see e.g. \cite[Section III.21]{WillRo}. The same holds for
$\eta=(\lambda\circ \rho)^{1/(3-4p)}$ since the map $x\mapsto x^{1/(3-4p)}$ is bijective on $\R_+$.
 \end{proof}
 
 By Proposition \ref{P2} and \eqref{E:scaling}, the process $\eta$ is an increasing self-similar Markov process, and hence also a Feller process; see \cite{Lamperti} and \cite[Chapter 13]{Kypri}. Its infinitesimal generator $G_{\eta}$  can be computed on $(0,\infty)$ using Volkonskii's formula \cite[III.21.4 on page 277-8]{WillRo}. Indeed, it is well-known \cite[Eq. 6.7]{Lamperti}
that the  infinitesimal generator $G_{\lambda}$ of $\lambda$ is given, say for a smooth bounded function $f:\R_+\to \R$, by
 $$G_{\lambda}f(x)=  \sqrt{\frac{1}{2\pi x}} \int_1^{\infty} (f(xu)-f(x)) (u-1)^{-3/2}\dd y, \qquad \text{for }x>0,$$
and we then easily get
\begin{align}\label{E:Geta}
G_{\eta}f(x) &=  \sqrt{\frac{3-4p}{2\pi x}} \int_1^{\infty} (f(xu^{1/(3-4p)})-f(x)) (u-1)^{-3/2}\dd u\nonumber \\
&=\sqrt{\frac{(3-4p)^3}{2\pi x}} \int_1^{\infty} (f(x v )-f(x)) (v^{3-4p}-1)^{-3/2} v^{2-4p} \dd v.
\end{align}

We stress that $\eta$ starts from the entrance boundary point $0$; we shall need to consider as well its version (in the Markovian sense) started from $x>0$,
which we denote by $\eta_x= (\eta_x(t))_{t\geq 0}$. By the scaling property, $\eta_x$ has the same law as $\left(x\eta_1(t/\sqrt x))\right)_{t\geq 0}$. Furthermore,
 $\eta_x$ converges in distribution towards $\eta$ as $x\to 0+$, see  \cite{BeCa}.

Last but not least, a fundamental result of Lamperti \cite{Lamperti} identifies the logarithm of any self-similar Markov process on $(0,\infty)$ as the time-change of some real-valued L\'evy process; see also \cite[Section 13.3]{Kypri}. In the present case, 
$$Y(t)\coloneqq \log \eta_1(\varsigma(t)), \quad\text{where}\quad \int_0^{\varsigma(t)} \frac{\dd s}{\sqrt{\eta_1(s)}}=t, \qquad t\geq 0,$$
is a subordinator. 
Its L\'evy measure $\Pi$ is obtained as  the image by the map $v\mapsto x=\log v$ of the measure 
$$\sqrt{\frac{(3-4p)^3}{2\pi }} (v^{3-4p}-1)^{-3/2} v^{2-4p} \dd v, \qquad v>1,$$
that appears in the infinitesimal generator of the self-similar Markov process $\eta$ in \eqref{E:Geta}, and we get 
\begin{equation} \label{E:Pi}
\Pi(\dd x) = \sqrt{\frac{(3-4p)^3}{2\pi }} \left( \e^{(3-4p)x}-1\right)^{-3/2} \e^{(3-4p)x} \dd x, \qquad x>0. 
\end{equation} 
We further infer from \eqref{E:Geta} that $Y$ has no drift, that is the L\'evy-Khintchin formula reads
$$\E\left(\exp(-q Y(t))\right) =\exp\left(-t\int_0^{\infty}(1-\e^{-qx})\Pi(\dd x)\right), \qquad q\geq 0.$$

Next, we turn our attention back to the elephant random walk and the set $\mathcal Z$ of its zeros,  working under $\P=\P_0$.
We  enumerate the elements of $\mathcal Z$ in the increasing order, namely
$\zeta(0)=0 < \zeta(1)=R<\zeta(2) < \ldots$, so that $\zeta(j)$ is the time of the $j$-th return of the elephant to the origin and 
$$Z(n)< j \ \Longleftrightarrow \zeta(j)>n.$$
In this framework, Theorem \ref{TZ} can be rephrased  as follows:
there is the weak convergence in the sense of finite dimensional distributions of the rescaled sequence
\begin{equation}\label{E:TZ}\lim_{n\to \infty} \left(n^{-1} \zeta(\lfloor n^2t\rfloor)\right)_{t\geq 0} \egaldistr  \left(\eta(t)\right)_{t\geq 0}.
\end{equation}

As it has already been discussed in the introduction, Theorem \ref{TZ} does not seem to fit with Theorem \ref{T1}.  We shall now establish the following consequence of Theorem \ref{T1}  which actually fully agrees with \eqref{E:TZ}, hence reconciling our two main results. 
Recall that for every even integer, $\P_k$ stands for the law of the elephant started at time $k$ from $0$.  Under $\P_k$, we  use  the notation $(\zeta_k(j))_{j\geq 0}$ for the sequence that 
enumerates the set of its zeros $\mathcal{Z}_k$ in the increasing order; in particular  $\zeta_k(0)=k$ and $\zeta_k(1)=R+k$, $\P_k$-a.s. 

 \begin{corollary} The law of 
 $\left(n^{-1} \zeta_n(\lfloor n^2t\rfloor)\right)_{t\geq 0}$ under $\P_n$ converge as $n\to \infty$, in the sense of Skorohod's $J_1$-topology on the space of right-continuous non-decreasing functions, towards the law of the process $\eta_1=\left(\eta_1(t)\right)_{t\geq 0}$.
 \end{corollary}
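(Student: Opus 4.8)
The plan is to establish the finite-dimensional convergence first and then upgrade to Skorohod $J_1$-convergence by exploiting monotonicity, which is precisely the setting where pointwise (at continuity points) convergence of non-decreasing processes implies $J_1$-convergence of the paths. The natural vehicle here is the general criterion of \cite{BeKor} for scaling limits of Markov chains on $\N$ converging to self-similar Markov processes. Under $\P_n$, the sequence $(\zeta_n(j))_{j\geq 0}$ that enumerates the zeros $\mathcal{Z}_n$ is a time-homogeneous Markov chain on $\{n, n+2, n+4, \ldots\}$: indeed, since $(m, S(m))_{m\geq 0}$ is a time-homogeneous Markov chain and the elephant is conditioned to start a fresh excursion from $0$ at each zero, the successive return times form a Markov chain whose transition from a current zero at time $k$ is governed exactly by the law of $R$ under $\P_k$. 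First I would record this Markov structure and identify the rescaled chain $\left(n^{-1}\zeta_n(\lfloor n^2 t\rfloor)\right)_{t\geq 0}$ as the candidate for convergence to $\eta_1$, the self-similar Markov process started from $1$ whose generator is given in \eqref{E:Geta} and whose Lamperti L\'evy measure is \eqref{E:Pi}.

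The key input is Theorem \ref{T1}, which supplies exactly the one-step tail asymptotics the criterion of \cite{BeKor} requires, uniformly in the starting level. Concretely, the criterion asks that the rescaled one-step jump distribution of the chain converge to the jump kernel of the limiting self-similar process; starting from a zero at time $k\approx xn$ (with $x>0$ fixed), Theorem \ref{T1} gives
$$\P_k(R > m) \sim \sqrt{\tfrac{2}{\pi}}\, \frac{a_{k+1}}{\sqrt{A_{m+k}-A_k}}$$
uniformly in $k\leq bm$, and substituting the Stirling-type equivalences \eqref{E:estab} and \eqref{E:estab'} this becomes, after rescaling time by $n^2$ and space by $n$, a tail that matches the jump measure of $\eta_1$ read off from \eqref{E:Geta}. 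The second step is therefore to carry out this asymptotic matching: take $k = \lfloor xn\rfloor$ and a target rescaled return level $v > x$, write the one-step tail $\P_k(R > (v^{3-4p}-x^{3-4p})(3-4p)^{-1}n^{2}\cdot(\text{correction}))$ in the variables of the limit, and verify that after multiplying by the correct normalizing factor one recovers the density $(v^{3-4p}-1)^{-3/2}v^{2-4p}$ appearing in \eqref{E:Geta} (up to the overall self-similar scaling relating the level $x$ to $1$). The uniformity in $k$ furnished by Theorem \ref{T1} is what allows the criterion to be applied along the whole trajectory rather than at a single step.

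The third step is to invoke the general convergence result of \cite{BeKor} to deduce convergence of the rescaled chains to the self-similar Markov process with generator \eqref{E:Geta}, identified as $\eta_1$ via Proposition \ref{P2}; the starting point $\zeta_n(0)=n$ rescales to $1$, which fixes the initial condition as $\eta_1$ rather than $\eta$. Since the approximating objects are non-decreasing right-continuous step functions and the limit $\eta_1$ is an increasing process, the mode of convergence provided by \cite{BeKor} is naturally Skorohod $J_1$ on the space of right-continuous non-decreasing functions, which is the statement claimed.

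\emph{Main obstacle.} The delicate part is the verification that the hypotheses of \cite{BeKor} hold uniformly, and in particular controlling the contribution of \emph{large} jumps of the chain, i.e. excursions of the elephant away from $0$ whose length is comparable to the whole time horizon. The tail estimate of Theorem \ref{T1} is stated in the regime $k\leq bn$, so one must ensure that the truncation implicit in applying a Markov-chain scaling criterion does not fall outside this regime, and that the heavy tail $n^{2p-3/2}$ (which for $p>1/4$ even makes $R$ have infinite mean) is correctly captured by the infinite-activity, infinite-mean L\'evy measure $\Pi$ in \eqref{E:Pi} rather than producing spurious mass at infinity. Matching the normalizations exactly — reconciling the factor $\sqrt{A_{n+k}-A_k}/a_{k+1}$ from Theorem \ref{T1} with the self-similar scaling \eqref{E:scaling} of $\eta$ — is where the bulk of the careful bookkeeping lies, but no genuinely new analytic ingredient beyond Theorem \ref{T1} and \cite{BeKor} should be needed.
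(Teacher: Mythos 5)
Your route is the same as the paper's: view $(\zeta_k(j))_{j\geq 0}$ as a time-homogeneous Markov chain whose one-step law from level $k$ is that of $k+R$ under $\P_k$, feed the uniform tail estimate of Theorem \ref{T1} into the scaling-limit criterion of \cite{BeKor}, and identify the limit with $\eta_1$ through the generator \eqref{E:Geta} and the Lamperti L\'evy measure \eqref{E:Pi}. Your verification of the jump-tail hypothesis is also what the paper does: combining Theorem \ref{T1} with \eqref{E:estab} and \eqref{E:estab'} yields
$$\lim_{k\to\infty} k^{1/2}\,\P_k(R>tk)=\sqrt{\frac{6-8p}{\pi\left((1+t)^{3-4p}-1\right)}},$$
which is exactly condition (A1) of \cite[Theorem 1]{BeKor} with normalization $\sqrt{n}$ and L\'evy measure \eqref{E:Pi}.

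The gap is that \cite[Theorem 1]{BeKor} has a second hypothesis, (A2), which your proposal never addresses and which is not implied by (A1). Vague convergence of the rescaled jump tails determines the L\'evy measure of the Lamperti subordinator, but one must additionally verify a truncated moment condition identifying the drift (here: that there is none) and excluding a spurious contribution from the accumulation of many small jumps; since the Lamperti process $Y$ is a driftless subordinator, this reduces to
$$\lim_{k\to\infty}\sqrt{k}\,\E_k\bigl(1\wedge\log(1+R/k)\bigr)=\int_{(0,\infty)}(1\wedge x)\,\Pi(\dd x).$$
The paper proves this by writing $\E_k(1\wedge\log(1+R/k))=\int_0^{\e-1}(1+t)^{-1}\P_k(R>kt)\,\dd t$ and applying dominated convergence, and the needed domination $\P_k(R>kt)\leq c/\sqrt{kt}$ for all $t\in(0,\e-1]$ is precisely the delicate point: contrary to your closing claim that nothing beyond the statement of Theorem \ref{T1} and \cite{BeKor} is required, this bound does not follow from that statement. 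Indeed, with $n\approx kt$ the uniformity window $k\leq bn$ of Theorem \ref{T1} forces $t\geq 1/b$, so no fixed $b$ covers the regime $t\downarrow 0$, which is exactly where the small jumps live. The paper instead extracts the bound from the internals of the proof of Theorem \ref{T1}, namely the inequality \eqref{E:domin} combined with Proposition \ref{P1} and Lemma \ref{L5}. Without this step, condition (A2) is unverified and the criterion of \cite{BeKor} cannot be invoked.
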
 

\begin{proof} In short, the claim follows from a general scaling limit theorem for integer valued Markov chains in \cite{BeKor} (see also \cite{HaMier} for an earlier work in this area), the limiting process being then a self-similar Markov process on $\R_+$. Theorem \ref{T1} is the key for checking that the framework of \cite{BeKor} indeed applies.

To start with, we observe from  the strong Markov property of $\left(n,S(n))\right)_{n\geq 0}$, that the conditional law of the elephant random walk started at the time
of  its $j$-th return to $0$,  $(S(n))_{n\geq \zeta(j)}$, given $\zeta(j)=k$,  is   the law $\P_k$ of the elephant started at time $k$ from $0$. It follows that $(\zeta_k(j))_{j\geq 0}$
is a homogeneous Markov chain with transition probabilities
given in the notation \eqref{E:defRk} by 
\begin{equation}\label{E:deftail}
\P_k(\zeta_k(1)=n+k) =\P_k(R=n) \qquad k,n\in 2\N. 
\end{equation}

We aim at applying \cite[Theorem 1]{BeKor}, and have to check the conditions denoted by (A1) and (A2) there.
The first follows directly from Theorem \ref{T1}. Indeed,  using  \eqref{E:estab} and \eqref{E:estab'}, we deduce that for any $t>0$, 
\begin{equation} \label{E:limtR}
\lim_{k\to \infty} k^{1/2} \P_k(R>tk) =  \sqrt{\frac{6-8p}{\pi (( t+1)^{3-4p}-1)}}.
\end{equation}
In the notation of \cite{BeKor}, this yields (A1) with\footnote{Beware that the sequence denoted by $(a_n)$ of \cite{BeKor}  is not the one defined by  \eqref{E:defa} here !}
$a_n=\sqrt n$ and $\Pi(\dd x)$ given by \eqref{E:Pi}. 

We next need to verify the condition (A2) of \cite{BeKor}. Since the L\'evy process $Y$ associated to $\eta_1$ by the transformation of  Lamperti is a subordinator with no drift, the latter reduces to checking that 
\begin{equation} \label{E:condB}
\lim_{k\to \infty} \sqrt k \E_k(1\wedge \log(1+R/k)) = \int_{(0,\infty)} (1\wedge x) \Pi(\dd x).
\end{equation}
In this direction, we write
$$ \E_k(1\wedge \log(1+R/k))=\int_0^{\e-1} \frac{1}{1+t} \P_k(R>kt) \dd t.$$
We readily infer from \eqref{E:domin}, Proposition \ref{P1} and Lemma \ref{L5} that 
$$\P_k(R>kt)\leq c/\sqrt{kt}, \qquad \text{for all } t\in(0,\e-1],$$
so that, by \eqref{E:limtR} and  dominated convergence,
\begin{align*}& \lim_{k\to \infty} \sqrt k \E_k(1\wedge \log(1+R/k))\\
 &=  \sqrt{\frac{6-8p}{\pi}}  \int_0^{\e-1} \frac{\dd t }{(1+t)\sqrt{ ( 1+t)^{3-4p}-1}} \\
&=  \sqrt{\frac{2(3-4p)}{\pi}}  \int_0^{1} \frac{\dd u }{\sqrt{ \e^{(3-4p)u}-1}}\\
&=  \sqrt{\frac{(3-4p)^3}{2\pi}}  \int_0^{\infty} (1\wedge x)  \left( \e^{(3-4p)s}-1\right)^{-3/2} \e^{(3-4p)x} \dd x.
\end{align*}
We have thus checked \eqref{E:condB} and the proof is complete. 
\end{proof}

  \begin{remark}\label{R:Fell2} The set of zeros $\mathcal Z$ of the elephant coincides with the range of the Markov chain $(\zeta(j))_{j\geq 0}$, and then
  \eqref{E:TZ} points at the convergence  in some appropriate distributional sense of $n^{-1}\mathcal Z$  to  the closed range of $\eta$,
  $\{\eta(t): t\geq 0\}^{\mathrm{(cl)}}$. On the one hand, Proposition \ref{P2} enables us to identify the latter with the closed range of $\lambda^{1/(3-4p)}$,
  $$\{\eta(t): t\geq 0\}^{\mathrm{(cl)}} = \{\lambda(s)^{1/(3-4p)}: s\geq 0\}^{\mathrm{(cl)}}.$$
 On the other hand, the closed range of $\lambda$ is precisely the zeros set  of the Brownian motion. This provides another rough argument for the weak convergence 
 \eqref{E:Fell}.
    \end{remark}
    
    \vskip 1cm
    \noindent\textbf{Acknowledgment.} I am grateful to two anonymous referees for their careful reading of the first version of this work and their constructive comments.

\bibliography{elephant.bib}

\end{document}